\newtheorem{thm}{Theorem}[section]
\newtheorem{lem}[thm]{Lemma}
\newtheorem{cor}[thm]{Corollary}
\theoremstyle{definition}\newtheorem{definition}[thm]{Definition}
\newtheorem{claim}[thm]{Claim}
\newtheorem{example}[thm]{Example}
\newtheorem{conjecture}[thm]{Conjecture}
\newtheorem*{cor:InductiveConstruction_TreesAndGraph}{Corollary \ref{cor:InductiveConstruction_TreesAndGraph}}
\newtheorem*{cor:RealisationIn2Dimensions}{Corollary \ref{cor:RealisationIn2Dimensions}}
\theoremstyle{definition}
\tikzset{vertex/.style={circle,fill=black,minimum size=0.4pt,inner sep=1.8pt}}
\tikzset{emptyvertex/.style={shape=circle, inner sep = 1.8pt, minimum size=0.4pt, draw}}
 \def\G{{\mathcal{G}}}
\renewcommand{\H}{{\mathcal{H}}}
\title{Block-and-hole graphs:\\ Constructibility and $(3,0)$-sparsity}
\author{Bryan Gin--ge Chen}
\email{bryangingechen@gmail.com}
\author{James Cruickshank}
\address[James Cruickshank]{School of Mathematical and Statistical Sciences, University of Galway, Galway, Ireland.}
\email{james.cruickshank@universityofgalway.ie}
\author{Derek Kitson}
\address[Derek Kitson]{Department of Mathematics and Computer Studies, Mary Immaculate College, Thurles, Co.~Tipperary, Ireland.
}
\email{Derek.Kitson@mic.ul.ie}
\begin{document}
\begin{abstract}
We show that minimally 3-rigid block-and-hole graphs, with one block or one hole, are characterised as those which are constructible from $K_3$ by vertex splitting, and also, as those having associated looped face graphs which are $(3,0)$-tight. This latter property can be verified in polynomial time by a form of pebble game algorithm. We also indicate connections to the rigidity properties of polyhedral surfaces known as {\em origami} and to graph rigidity in $\ell_p^3$ for $p\not=2$.       
\end{abstract}

\maketitle

\section{Introduction}
A finite simple graph is {\em $3$-rigid} if it forms the structure graph for an infinitesimally rigid bar-and-joint framework in Euclidean $3$-space. If, in addition, the removal of any edge from the graph results in a subgraph which is not $3$-rigid then the graph is {\em minimally} $3$-rigid. 
A {\em block-and-hole graph} is obtained by first triangulating a sphere, then removing the interiors of some triangulated discs to create holes, and finally adjoining minimally $3$-rigid graphs to the boundaries of some of the resulting holes to create blocks.
It is well known that a graph obtained from a triangulation of a sphere is minimally $3$-rigid, see for example \cite{gluck}. Whiteley (\cite[Theorem 4.2]{whiteley2}) showed that a block-and-hole graph with a single block and a single hole, and common boundary length $k$, is minimally 3-rigid if and only if the removal of any $k-1$ vertices does not disconnect the two boundary cycles. In \cite{frw}, it is shown that switching the blocks and holes in a block-and-hole graph preserves minimal $3$-rigidity. The main theorem of \cite{ckp} characterizes the
minimally 3-rigid block-and-hole graphs with a single block and finitely
many holes (or vice versa) as those which are $(3,6)$-tight.  Moreover, Whiteley's result is generalised in the form of girth inequalities  and a constructive characterisation is obtained which uses the well-known graph move known as {\em vertex-splitting} together with a form of graph fusion known as {\em isostatic  substitution}. In recent work, Jord\'{a}n (\cite{jordan}) has obtained a rank formula for the $3$-dimensional rigidity matroid of a block-and-hole graph with a single block.

In Section \ref{s:split}, we improve the constructive characterisation obtained in \cite{ckp} by circumventing the need for isostatic substitution. The main result, Theorem \ref{thm:MainImproved}, characterises minimally $3$-rigid block-and-hole graphs, with one block or one hole, as those for which an associated discus-and-hole graph is constructible from the complete graph $K_3$ by a sequence of vertex splitting operations. In Section \ref{s:(3,0)}, we present new characterisations of minimal $3$-rigidity for block-and-hole graphs which are expressed in terms of associated multigraphs, referred to as {\em looped face graphs}. The main result, Theorem \ref{thm:(3,0)}, shows that a block-and-hole graph, with one block or one hole, is  minimally $3$-rigid if and only if its associated looped face graphs are $(3,0)$-tight.
This latter property has a significant advantage that, unlike $(3,6)$-tightness, it is verifiable in polynomial time by a  pebble game algorithm (see \cite{leestreinu}).
In Section \ref{s:applications}, we  apply our constructive characterisation to show that $(3,6)$-tight discus-and-hole graphs are independent in any 3-dimensional real normed linear space which is smooth and strictly convex. Conjectures are made on the minimal rigidity of block-and-hole graphs in the normed space $\ell_p^3$, for $p\in[1,\infty]$, $p\not=2$, and on the global rigidity of discus-and-hole graphs in the Euclidean space $\mathbb{R}^3$. Finally, we indicate connections between the rigidity properties of block-and-hole graphs and the rigidity properties of polyhedral surfaces known as {\em origami}.
   
In what follows, we use the definition and notation of block-and-hole graphs
and related terminology, from  \cite{ckp}. 
Let $S = (V, E)$ be the graph of a triangulated sphere (i.e.~a maximal planar graph). Let $c$ be a simple cycle
in $S$ of length four or more. Then $c$ determines two complementary planar subgraphs of $S$, each with a single non-triangular face bordered by the edges of $c$. Such a subgraph $D$ is referred to as a {\em simplicial disc} of $S$ with {\em boundary cycle} $\partial D:=c$. 
A collection of simplicial discs is {\em internally-disjoint} if their respective triangular faces are pairwise disjoint.

\begin{definition}
A {\em face graph} is a simple graph, $G$, which is obtained from the graph of a triangulated sphere, $S$, by,
\begin{enumerate}[(i)]
    \item choosing a collection of internally disjoint simplicial discs in $S$,
    \item removing the vertices and edges of each of these simplicial discs except for the vertices and edges which lie in the boundary cycles of the simplicial discs,
    \item labelling the non-triangular faces of the resulting planar graph by either $B$ or $H$.
\end{enumerate}
\end{definition}
A $BH$ edge in a face graph $G$ is an edge which lies in the boundary of a $B$-labelled face and a $H$-labelled face. A $TT$ edge in $G$ is an edge which lies in the boundary of two triangular faces.

\begin{example}
Figure \ref{f:facegraph} illustrates the three steps in the construction of a face graph beginning  on the left hand side with a maximal planar graph. Two internally disjoint simplicial discs are chosen with boundary cycles indicated in red and blue. Non-boundary vertices and edges of the chosen simplicial discs are removed (centre) and finally non-triangular faces are labelled by either $B$ or $H$ (right).     
\end{example}

\begin{figure}
\label{f:facegraph}
\definecolor{uququq}{rgb}{0.25,0.25,0.25}
\definecolor{zzttqq}{rgb}{0.6,0.2,0}
\definecolor{qqqqff}{rgb}{0,0,1}

\begin{tikzpicture}[line cap=round,line join=round,x=0.7cm,y=0.7cm]
\begin{scope}[shift={(0,3)},scale=0.5]
\clip(-7.1,-2) rectangle (17.52,8.88);
\draw [color=zzttqq] (-1.74,-0.08)-- (2.8,1.1);
\draw [color=zzttqq] (2.8,1.1)-- (5.07,3.85);
\draw [color=zzttqq] (5.07,3.85)-- (2.4,6.78);
\draw [color=zzttqq] (2.4,6.78)-- (-1.74,7.78);
\draw [color=zzttqq] (-1.74,7.78)-- (-3.01,3.85);
\draw [color=zzttqq] (-3.01,3.85)-- (-1.74,-0.08);
\draw  (-1.74,7.78)-- (-1.66,5.22);
\draw (-1.66,5.22)-- (2.66,5.34);
\draw [color=blue] (2.66,5.34)-- (2.4,6.78);
\draw (2.66,5.34)-- (5.07,3.85);
\draw [color=blue] (2.66,5.34)-- (0.56,1.84);
\draw  (0.56,1.84)-- (2.8,1.1);
\draw (0.56,1.84)-- (-1.74,-0.08);
\draw [color=blue] (0.56,1.84)-- (-1.66,5.22);
\draw (-1.66,5.22)-- (-3.01,3.85);

\draw  (-1.74,-0.08) to [out=-30,in=-90] (5.07,3.85);
\draw  (5.07,3.85) to [out=90,in=30] (-1.74,7.78);
\draw  (-1.74,7.78) to [out=180,in=180] (-1.74,-0.08);

\draw  [color=blue] (-1.66,5.22) -- (2.4,6.78);
\draw  (-1.66,5.22) -- (-1.74,-0.08);
\draw  (2.66,5.34) -- (2.8,1.1);

\begin{scriptsize}
\fill [color=uququq] (-1.74,-0.08) circle (2.5pt);
\fill [color=uququq] (2.8,1.1) circle (2.5pt);
\fill [color=uququq] (5.07,3.85) circle (2.5pt);
\fill [color=uququq] (2.4,6.78) circle (2.5pt);
\fill [color=uququq] (-1.74,7.78) circle (2.5pt);
\fill [color=uququq] (-3.01,3.85) circle (2.5pt);
\fill [color=uququq] (-1.66,5.22) circle (2.5pt);
\fill [color=uququq] (2.66,5.34) circle (2.5pt);
\fill [color=uququq] (0.56,1.84) circle (2.5pt);
\end{scriptsize}

\end{scope}

\begin{scope}[shift={(5.5,3)},scale=0.5]
\clip(-7.1,-2) rectangle (17.52,8.88);
\draw [color=zzttqq] (-1.74,-0.08)-- (2.8,1.1);
\draw [color=zzttqq] (2.8,1.1)-- (5.07,3.85);
\draw [color=zzttqq] (5.07,3.85)-- (2.4,6.78);
\draw [color=zzttqq] (2.4,6.78)-- (-1.74,7.78);
\draw [color=zzttqq] (-1.74,7.78)-- (-3.01,3.85);
\draw [color=zzttqq] (-3.01,3.85)-- (-1.74,-0.08);
\draw (-1.74,7.78)-- (-1.66,5.22);
\draw (2.66,5.34)-- (2.4,6.78);
\draw (2.66,5.34)-- (5.07,3.85);
\draw (2.66,5.34)-- (0.56,1.84);
\draw (0.56,1.84)-- (2.8,1.1);
\draw (0.56,1.84)-- (-1.74,-0.08);
\draw (0.56,1.84)-- (-1.66,5.22);
\draw (-1.66,5.22)-- (-3.01,3.85);

\draw  (-1.66,5.22) -- (2.4,6.78);
\draw  (-1.66,5.22) -- (-1.74,-0.08);
\draw  (2.66,5.34) -- (2.8,1.1);

\begin{scriptsize}
\fill [color=uququq] (-1.74,-0.08) circle (2.5pt);
\fill [color=uququq] (2.8,1.1) circle (2.5pt);
\fill [color=uququq] (5.07,3.85) circle (2.5pt);
\fill [color=uququq] (2.4,6.78) circle (2.5pt);
\fill [color=uququq] (-1.74,7.78) circle (2.5pt);
\fill [color=uququq] (-3.01,3.85) circle (2.5pt);
\fill [color=uququq] (-1.66,5.22) circle (2.5pt);
\fill [color=uququq] (2.66,5.34) circle (2.5pt);
\fill [color=uququq] (0.56,1.84) circle (2.5pt);
\end{scriptsize}

\end{scope}

\begin{scope}[shift={(11,3)},scale=0.5]
\clip(-7.1,-2) rectangle (17.52,8.88);
\draw [color=zzttqq] (-1.74,-0.08)-- (2.8,1.1);
\draw [color=zzttqq] (2.8,1.1)-- (5.07,3.85);
\draw [color=zzttqq] (5.07,3.85)-- (2.4,6.78);
\draw [color=zzttqq] (2.4,6.78)-- (-1.74,7.78);
\draw [color=zzttqq] (-1.74,7.78)-- (-3.01,3.85);
\draw [color=zzttqq] (-3.01,3.85)-- (-1.74,-0.08);
\draw (-1.74,7.78)-- (-1.66,5.22);
\draw (2.66,5.34)-- (2.4,6.78);
\draw (2.66,5.34)-- (5.07,3.85);
\draw (2.66,5.34)-- (0.56,1.84);
\draw (0.56,1.84)-- (2.8,1.1);
\draw (0.56,1.84)-- (-1.74,-0.08);
\draw (0.56,1.84)-- (-1.66,5.22);
\draw (-1.66,5.22)-- (-3.01,3.85);

\draw  (-1.66,5.22) -- (2.4,6.78);
\draw  (-1.66,5.22) -- (-1.74,-0.08);
\draw  (2.66,5.34) -- (2.8,1.1);

\draw (-0.5,5.2) node[anchor=north west] {$ H $};
\draw (-4.1,6.84) node[anchor=north west] {$ B $};

\begin{scriptsize}
\fill [color=uququq] (-1.74,-0.08) circle (2.5pt);
\fill [color=uququq] (2.8,1.1) circle (2.5pt);
\fill [color=uququq] (5.07,3.85) circle (2.5pt);
\fill [color=uququq] (2.4,6.78) circle (2.5pt);
\fill [color=uququq] (-1.74,7.78) circle (2.5pt);
\fill [color=uququq] (-3.01,3.85) circle (2.5pt);
\fill [color=uququq] (-1.66,5.22) circle (2.5pt);
\fill [color=uququq] (2.66,5.34) circle (2.5pt);
\fill [color=uququq] (0.56,1.84) circle (2.5pt);
\end{scriptsize}

\end{scope}

\end{tikzpicture}
\begin{caption}{Constructing a face graph. }
    \label{fig:facegraph}
\end{caption}
\end{figure}

\begin{definition}
A {\em block-and-hole graph}  
is a simple graph of the form $\hat{G}=G\cup \hat{B}_1\cup\cdots\cup \hat{B}_m$ where,
\begin{enumerate}[(i)]
\item $G$ is a face graph with $m$ $B$-labelled faces $B_1,\ldots,B_m$, 
\item $\hat{B}_1,\ldots,\hat{B}_m$ are minimally $3$-rigid graphs,
\item $G\cap \hat{B}_i = \partial B_i$, for each $i=1,\ldots,m$.
\end{enumerate}
\end{definition}
We refer to the minimally $3$-rigid graphs $\hat{B}_1,\ldots,\hat{B}_m$ as {\em blocks} and the $H$-labelled faces of $G$ as {\em holes}.

For each $B$-labelled face $B_i$ we can construct a block 
 $B_i^\dagger$ with,
\[
V(B_i^\dagger) = V(\partial B_i) \cup\{x_i,y_i\},
\quad E(B_i^\dagger) =E(\partial B_i) \cup \{(v,x_i),(v,y_i):v \in  V(\partial B_i)\}.
\]
The block $B_i^\dagger$ is referred to as a {\em simplicial discus} with {\em poles} at $x_i$ and $y_i$. 
The resulting block-and-hole graph $G^\dagger:=G\cup B_1^\dagger\cup\cdots\cup B_m^\dagger$ is  referred to as the {\em discus-and-hole graph} for $G$.

Let $f(J)$ denote the \emph{freedom number} $3|V(J)|-|E(J)|$ of a graph $J$. 
A simple graph $J$ is said to be  {\em $(3,6)$-sparse} if $f(J')\geq 6$ for any subgraph $J'$ containing at least two edges. The graph $J$ is {\em $(3,6)$-tight} if it is $(3,6)$-sparse and $f(J)=6$. We denote by $\mathcal{G}(m, n)$ the set of face graphs with $m$ $B$-labelled faces and $n$ $H$-labelled faces for which the discus-and-hole graph $G^\dagger$ is $(3, 6)$-tight.

We will make reference to the following theorem which is proved in \cite{ckp}.

\begin{thm}\label{t:ckpthm}  
 Let $\hat{G}$ be a block-and-hole graph with a single block and finitely many holes, or, a single hole and finitely many blocks. 
 Then the following statements are equivalent.
  \begin{enumerate}[(i)]
\item $\hat{G}$ is minimally $3$-rigid.
\item $\hat{G}$ is $(3,6)$-tight.
\item $\hat{G}$ is constructible from $K_3$ by vertex splitting and isostatic  substitution.
\item $\hat{G}$ satisfies the girth inequalities. 
\end{enumerate}
\end{thm}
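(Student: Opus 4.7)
The plan is to establish the four-way equivalence via a cycle of implications $(i) \Rightarrow (ii) \Rightarrow (iv) \Rightarrow (iii) \Rightarrow (i)$. The implication $(i) \Rightarrow (ii)$ follows from Maxwell's classical necessary condition: any minimally $3$-rigid graph $\hat{G}$ satisfies $|E(\hat{G})|=3|V(\hat{G})|-6$, and every subgraph with at least one edge is independent in the $3$-dimensional rigidity matroid, hence has $f \geq 6$. Since each block $\hat{B}_i$ is itself minimally $3$-rigid, this sparsity passes to subgraphs that meet the blocks as well as those contained in the face-graph part. For the closing implication $(iii) \Rightarrow (i)$, one uses Whiteley's theorem that vertex splitting preserves minimal $3$-rigidity, together with the definition of isostatic substitution, which replaces one minimally $3$-rigid patch by another on the same boundary, and is therefore rigidity-preserving by construction. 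Starting from the base $K_3$, which has $3|V|-6=3=|E|$ and is indeed minimally $3$-rigid in $\mathbb{R}^3$, induction delivers (i).

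The implication $(ii) \Rightarrow (iv)$ is a counting argument. The girth inequalities are numerical constraints relating the boundary length of a hole (or a combination of holes) to the sizes of cycles that separate it from the rest of the graph. Starting from $(3,6)$-tightness, one applies $f \geq 6$ to carefully chosen subgraphs bounded by such separating cycles. The key is to exploit the triangulated nature of the face-graph part: inside a simplicial disc of boundary length $k$ with $t$ interior vertices, Euler's formula pins down the relation between vertices, edges, and triangular faces, so that $f \geq 6$ translates directly into the girth inequality for that cycle. One handles combinations of holes by the same device applied to unions of bounded regions.

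The heart of the theorem is $(iv) \Rightarrow (iii)$, and this is where I expect the main obstacle to lie. The approach is inductive on $|V(\hat{G})|$: in any block-and-hole graph satisfying the girth inequalities, one seeks either a vertex whose neighbourhood admits an inverse vertex split, or a minimally $3$-rigid subgraph that can be replaced by a simpler one via isostatic substitution, thereby decreasing the complexity of the graph. The central technical difficulty is two-fold: guaranteeing that a reducible configuration always exists, and ensuring that the resulting smaller graph still satisfies the girth inequalities so the inductive hypothesis applies. For the existence of a low-degree vertex, one uses the single-block (or single-hole) hypothesis together with sparsity to run an averaging argument over the triangulated region, ruling out the possibility that every vertex of the face graph has degree too large. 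When no suitable low-degree vertex exists for an inverse split, the girth inequalities must be invoked to exhibit an isostatic patch whose substitution leaves the girth inequalities intact while strictly simplifying the structure, reducing eventually down to $K_3$. Verifying that the inverse split's target vertices and the substituted subgraph preserve all girth inequalities simultaneously is the delicate combinatorial core of the argument.
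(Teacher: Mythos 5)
First, note that the paper you were given does not prove this statement at all: Theorem \ref{t:ckpthm} is imported verbatim from \cite{ckp} (``We will make reference to the following theorem which is proved in \cite{ckp}''), so the comparison must be against the proof in that reference, whose machinery is partly redeveloped in Section \ref{s:split} of the present paper. Your outer implications are fine and standard: $(i)\Rightarrow(ii)$ is Maxwell counting plus independence of subgraphs in the $3$-dimensional rigidity matroid, and $(iii)\Rightarrow(i)$ follows since vertex splitting (Whiteley \cite{whi-vertex}) and isostatic substitution (Lemma \ref{l:isp}) both preserve minimal $3$-rigidity starting from $K_3$. The counting direction relating sparsity to the girth inequalities is also the right idea in spirit.

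The genuine gap is exactly where you flag it, and flagging it is not the same as closing it: your proposed proof of the hard implication (producing a construction scheme from the sparsity/girth hypothesis) is a plan with its central mechanism missing. An ``averaging argument for a low-degree vertex'' cannot work as stated, because the obstruction is not the absence of contractible edges but the fact that contracting a $TT$ edge destroys $(3,6)$-tightness precisely when that edge lies on a non-facial \emph{critical separating cycle} (\cite[Lemma 27]{ckp}); any correct argument must therefore develop the calculus of critical separating cycles, the hole-filling lemma (Lemma \ref{HoleFilling}), admissible $TT$ and $BH$ edge contractions, and culminate in the nontrivial fact that no face graph in $\mathcal G(1,n)$ is simultaneously terminal, indivisible and $BH$-reduced (Lemma \ref{l:cor33}); when no contraction is admissible, the reduction proceeds not by substituting an arbitrary ``isostatic patch'' but by cutting along a critical separating cycle $c$ into $Ext(c)$ and the internal face graph, each of which is again $(3,6)$-tight (cf.\ Lemma \ref{l:csc}), and none of this appears in your sketch. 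Two further omissions: you never normalise the arbitrary minimally $3$-rigid blocks to simplicial discus blocks via isostatic substitution (the induction in \cite{ckp} runs on discus-and-hole graphs $G^\dagger$, not on general $\hat G$), and you never address the ``single hole, finitely many blocks'' case, which is handled by the block--hole swapping theorem of \cite{frw} rather than by rerunning the argument. As written, your proposal correctly identifies where the difficulty lives but supplies no proof of it.
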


\section{Vertex splitting}
\label{s:split}
Let $J$ be a simple graph and let $v$ be a vertex of $J$ with adjacent vertices $v_1,v_2,\ldots, v_n$, $n\geq 2$.
Construct a new graph $\tilde{J}$ from $J$ by,
\begin{enumerate}[(i)]
\item removing the vertex $v$ and its incident edges from $J$,
\item adjoining two new vertices $w_1,w_2$,
\item adjoining the edge $w_1v_j$ or the edge $w_2v_j$ for each 
$j=3,4,\ldots,n$,
\item adjoining the five edges $v_1w_1,v_2w_1$, $v_1w_2,v_2w_2$ and $w_1w_2$.
\end{enumerate}
The graph $\tilde{J}$ is said to be obtained from $J$ by  {\em (3-dimensional) vertex splitting}. See Figure \ref{fig:vsplit} for an illustration.

\begin{figure}[ht]
	\begin{tikzpicture}
		\node[vertex] (0) at (-6,0.5) {};
		\node[vertex] (1) at (-6.8,0.5) {};
		\node[vertex] (2) at (-5.2,0.5) {};
		
		\node (n1) at (-6.3,1.3) {};
		\node (n2) at (-5.7,1.4) {};
		\node (n3) at (-5.7,-0.3) {};
		
		\draw (0)edge(1);
		\draw (0)edge(2);	

		\draw (0)edge(n1);
		\draw (0)edge(n2);	
		\draw (0)edge(n3);	
			
		\draw[->] (-3.5,0.5) -- (-2.5,0.5);
		
		\node (m1) at (-0.55,1.3) {};
		\node (m2) at (0.55,1.4) {};
		\node (m3) at (0.55,-0.3) {};

		\node[vertex] (0') at (0,1) {};
		\node[vertex] (0'') at (0,0) {};
		\node[vertex] (1') at (-0.8,0.5) {};
		\node[vertex] (2') at (0.8,0.5) {};
		
		\draw (0')edge(1');
		\draw (0')edge(2');
		\draw (0'')edge(1');
		\draw (0'')edge(2');
		\draw (0')edge(0'');
		
		\draw (0')edge(m1);
		\draw (0')edge(m2);	
		\draw (0'')edge(m3);	
	\end{tikzpicture}
	\caption{A vertex splitting operation.}
	\label{fig:vsplit}
\end{figure}
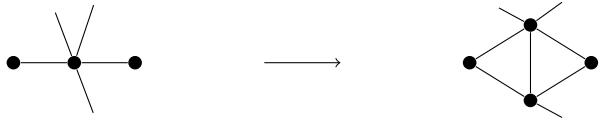

In this section we show that a block-and-hole graph with a single block, or a single hole, is minimally $3$-rigid if and only if the corresponding discus-and-hole graph is constructible from $K_3$ by vertex splitting. For more on vertex splitting and rigid graphs see \cite{whi-vertex} for example.

\subsection{Critical separating cycles}
Let $G$ be a face graph with exactly one $B$-labelled face and any number of $H$-labelled faces. 
Fix a planar realisation of $G$ such that the unbounded face is $B$-labelled. Let $c$ be a simple cycle in $G$.
Define $G_1$ to be the face graph obtained from $G$ and $c$ by,
\begin{enumerate}[(i)]
\item removing all edges and vertices interior to $c$, and,
\item if $|c|\geq4$, viewing the edges of $c$ as the boundary of a new face with label  $H$. 
\end{enumerate}
Define $G_2$ to be the face graph obtained from $G$ and $c$ by,
\begin{enumerate}[(i)]
\item  removing all edges and vertices which are exterior to $c$, and, 
\item if $|c|\geq4$, viewing the edges of $c$ as the boundary of a new face with label  $B$. 
\end{enumerate}
We refer to $G_1$ and $G_2$ respectively as the {\em external} and {\em internal face graphs} associated with $c$. 
See Figure \ref{f:ExtInt} for an illustration.

\begin{figure}
\definecolor{uququq}{rgb}{0.25,0.25,0.25}
\definecolor{zzttqq}{rgb}{0.6,0.2,0}
\definecolor{qqqqff}{rgb}{0,0,1}

\begin{tikzpicture}[line cap=round,line join=round,x=0.7cm,y=0.7cm]

\begin{scope}[shift={(4,3)},scale=0.5]
\clip(-6.1,-0.85) rectangle (17.52,8.88);
\draw [color=zzttqq] (-1.74,-0.08)-- (2.8,1.1);
\draw [color=zzttqq] (2.8,1.1)-- (5.07,3.85);
\draw [color=zzttqq] (5.07,3.85)-- (2.4,6.78);
\draw  (2.4,6.78)-- (-1.74,7.78);
\draw  (-1.74,7.78)-- (-3.01,3.85);
\draw [color=zzttqq] (-3.01,3.85)-- (-1.7,-0.08);
\draw  (-1.74,7.78)-- (-1.66,5.22);
\draw  (2.66,5.34)-- (2.4,6.78);
\draw  (2.66,5.34)-- (5.07,3.85);
\draw  (2.66,5.34)-- (0.56,1.84);
\draw  (0.56,1.84)-- (2.8,1.1);
\draw  (0.56,1.84)-- (-1.74,-0.08);
\draw [color=zzttqq] (-1.66,5.22)-- (-3.01,3.85);

\draw [color=zzttqq] (-1.66,5.22) -- (2.4,6.78);
\draw  (2.66,5.34) -- (2.8,1.1);

\draw (-1.5,4) node[anchor=north west] {$ H $};
\draw (-4.1,6.84) node[anchor=north west] {$ B $};

\begin{scriptsize}
\fill [color=uququq] (-1.74,-0.08) circle (2.5pt);
\fill [color=uququq] (2.8,1.1) circle (2.5pt);
\fill [color=uququq] (5.07,3.85) circle (2.5pt);
\fill [color=uququq] (2.4,6.78) circle (2.5pt);
\fill [color=uququq] (-1.74,7.78) circle (2.5pt);
\fill [color=uququq] (-3.01,3.85) circle (2.5pt);
\fill [color=uququq] (-1.66,5.22) circle (2.5pt);
\fill [color=uququq] (2.66,5.34) circle (2.5pt);
\fill [color=uququq] (0.56,1.84) circle (2.5pt);
\end{scriptsize}
\end{scope}

\begin{scope}[shift={(9.5,3)},scale=0.5]
\clip(-12.1,-0.85) rectangle (17.52,8.88);
\draw [color=zzttqq] (-1.74,-0.08)-- (2.8,1.1);
\draw [color=zzttqq] (2.8,1.1)-- (5.07,3.85);
\draw [color=zzttqq] (5.07,3.85)-- (2.4,6.78);
\draw [color=zzttqq] (2.4,6.78)-- (-1.74,7.78);
\draw [color=zzttqq] (-1.74,7.78)-- (-3.01,3.85);
\draw [color=zzttqq] (-3.01,3.85)-- (-1.7,-0.08);
\draw [color=zzttqq] (-1.74,7.78)-- (-1.66,5.22);
\draw [color=zzttqq] (-1.66,5.22)-- (-3.01,3.85);

\draw [color=zzttqq] (-1.66,5.22) -- (2.4,6.78);

\draw (-0.5,4) node[anchor=north west] {$ H $};
\draw (-4.1,6.84) node[anchor=north west] {$ B $};

\begin{scriptsize}
\fill [color=uququq] (-1.74,-0.08) circle (2.5pt);
\fill [color=uququq] (2.8,1.1) circle (2.5pt);
\fill [color=uququq] (5.07,3.85) circle (2.5pt);
\fill [color=uququq] (2.4,6.78) circle (2.5pt);
\fill [color=uququq] (-1.74,7.78) circle (2.5pt);
\fill [color=uququq] (-3.01,3.85) circle (2.5pt);
\fill [color=uququq] (-1.66,5.22) circle (2.5pt);
\end{scriptsize}
\end{scope}

\begin{scope}[shift={(15,3)},scale=0.5]
\clip(-12.1,-0.85) rectangle (17.52,8.88);
\draw [color=zzttqq] (-1.74,-0.08)-- (2.8,1.1);
\draw [color=zzttqq] (2.8,1.1)-- (5.07,3.85);
\draw [color=zzttqq] (5.07,3.85)-- (2.4,6.78);
\draw [color=zzttqq] (-3.01,3.85)-- (-1.7,-0.08);
\draw [color=zzttqq] (2.66,5.34)-- (2.4,6.78);
\draw [color=zzttqq] (2.66,5.34)-- (5.07,3.85);
\draw [color=zzttqq] (2.66,5.34)-- (0.56,1.84);
\draw [color=zzttqq] (0.56,1.84)-- (2.8,1.1);
\draw [color=zzttqq] (0.56,1.84)-- (-1.74,-0.08);
\draw [color=zzttqq] (-1.66,5.22)-- (-3.01,3.85);

\draw [color=zzttqq] (-1.66,5.22) -- (2.4,6.78);
\draw  [color=zzttqq] (2.66,5.34) -- (2.8,1.1);

\draw (-1.5,4) node[anchor=north west] {$ H $};
\draw (-3.1,6.84) node[anchor=north west] {$ B $};

\begin{scriptsize}
\fill [color=uququq] (-1.74,-0.08) circle (2.5pt);
\fill [color=uququq] (2.8,1.1) circle (2.5pt);
\fill [color=uququq] (5.07,3.85) circle (2.5pt);
\fill [color=uququq] (2.4,6.78) circle (2.5pt);
\fill [color=uququq] (-3.01,3.85) circle (2.5pt);
\fill [color=uququq] (-1.66,5.22) circle (2.5pt);
\fill [color=uququq] (2.66,5.34) circle (2.5pt);
\fill [color=uququq] (0.56,1.84) circle (2.5pt);
\end{scriptsize}
\end{scope}

\end{tikzpicture}
\begin{caption}{Left: A cycle $c$ (indicated in red)  in a face graph with one $B$-labelled face. Centre: The associated external face graph $G_1$.
Right: The associated internal face graph $G_2$.}
    \label{f:ExtInt}
\end{caption}
\end{figure}
Note that in the case where $|c|=3$, the internal face graph $G_2$ has no $B$-labelled face.
We denote by $Ext_G(c)$, or simply $Ext(c)$ when the context is clear, the discus-and-hole graph for the external face graph $G_1$. Note that $Ext(c)$ is a block-and-hole graph with a single block and so, by Theorem \ref{t:ckpthm}, $Ext(c)$ is $(3,6)$-tight if and only if it is minimally $3$-rigid.

\begin{definition}
A {\em critical separating cycle} for a face graph $G\in \mathcal{G}(1,n)$ 
is a simple cycle $c$ in $G$ with the property that the external discus-and-hole graph $Ext(c)$ is $(3,6)$-tight. 
\end{definition}

We will require the following lemma which is adapted from the proof of \cite[Proposition 22]{ckp}.

\begin{lem}
\label{l:commonH}
Let  $G\in \G(1,n)$ and let $v$ and $w$ be distinct vertices in $\partial B$ which are not joined by a $BH$ edge in $G$. If $v$ and $w$ lie in a common $H$-labelled face  then $G$ contains a non-facial critical separating cycle.
\end{lem}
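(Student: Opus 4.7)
The plan is to construct a non-facial simple cycle $c$ through $v$ and $w$ and then verify that $Ext(c)$ is $(3,6)$-tight. I would fix a planar realisation of $G$ with the $B$-labelled face as the unbounded face and let $H$ denote the common $H$-labelled face. The vertices $v,w$ split $\partial B$ into two arcs and $\partial H$ into two arcs; because $v,w$ are not joined by a $BH$ edge, I can select an arc $\alpha\subseteq\partial B$ and an arc $\beta\subseteq\partial H$ from $v$ to $w$ sharing only their endpoints, so that $c:=\alpha\cup\beta$ is a simple cycle of length at least $3$. Among the (up to) four such cycles I would pick one that encloses the face $H$ on one side of $c$ and leaves the $B$-face on the other.

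For the non-facial property, note that a facial cycle has exactly one face of $G$ on one of its sides. Because $c$ has been chosen to separate the labelled $B$-face from the labelled $H$-face, it cannot bound a single face. The borderline case $|c|=3$ is ruled out by the same separation argument: the triangular region on one side of $c$ would have to coincide with the $B$-face or the $H$-face, contradicting the fact that these are non-triangular labelled faces of $G$.

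For $(3,6)$-tightness of $Ext(c)$, sparsity is immediate because $Ext(c)=G_1\cup B^\dagger$ is a subgraph of the $(3,6)$-tight graph $G^\dagger$. For the freedom count I would use the identity
\[
f(Ext(c)) \;=\; f(G^\dagger) - f(G_2) + 2|c| \;=\; 6 + 2|c| - f(G_2),
\]
obtained by comparing vertex and edge contributions across the shared cycle $c$. Sparsity then forces $f(G_2)\leq 2|c|$. For the reverse inequality I would apply Euler's formula to the planar face graph $G_2$ to obtain
\[
f(G_2) \;=\; |c| + \sum_{H'\subseteq G_2} |\partial H'| - 3n_2 + 3,
\]
where $n_2$ counts the $H$-labelled faces of $G$ enclosed by $c$, and combine this with the global identity $\sum_{\text{all }H'} |\partial H'| = |\partial B| + 3n - 3$, which follows from $f(G^\dagger)=6$. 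The arc choice, which places $H$ (and only the appropriate $H$-faces of $G$) inside $c$, then produces the desired equality $f(G_2)=2|c|$.

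The delicate step is making sure the arc selection really delivers $f(G_2)=2|c|$ in full generality — particularly when $\partial B$ and $\partial H$ share more than the two vertices $v,w$, or when additional $H$-faces of $G$ must be bundled into the interior to balance the Euler count. Verifying that the hypotheses (a common $H$-face containing $v$ and $w$, and no $BH$ edge joining them) always permit such a choice is the substantive work, following the outline of the corresponding argument in the proof of \cite[Proposition 22]{ckp}.
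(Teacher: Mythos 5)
Your setup and bookkeeping are correct as far as they go: the identity $f(Ext(c)) = 6 + 2|c| - f(G_2)$ is right, and sparsity of $G^\dagger$ does give $f(Ext(c))\geq 6$ (equivalently $f(G_2)\leq 2|c|$) for free. But the proposal has a genuine gap exactly where you flag it: you never prove the reverse inequality $f(G_2)\geq 2|c|$ for any particular choice of arcs, and this equality \emph{is} the content of the lemma --- it is precisely the assertion that $c$ is critical. It cannot be deferred as a ``delicate verification'': for a single arbitrarily chosen cycle through $v$ and $w$ the equality is false in general, since non-facial separating cycles that fail to be critical ($f(G_2)<2|c|$) are abundant (indivisible face graphs have no non-facial critical separating cycles at all, yet typically contain many non-facial separating cycles). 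Your Euler-formula computation only restates the required equality as $\sum_{H'\text{ inside }c}(|\partial H'|-3)=|c|-3$; it gives no mechanism for choosing arcs that achieve it. A smaller flaw: your exclusion of $|c|=3$ is both unnecessary (the definition of critical separating cycle carries no length restriction, and a non-facial $3$-cycle would serve the conclusion) and incorrectly argued (a $3$-cycle can bound a triangulated disc with interior vertices, so its interior need not ``coincide with'' a labelled face).

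The paper closes this gap with a pairing trick that avoids any interior count. Write $\partial H$ as the union of two edge-disjoint $v$--$w$ paths $\pi_1,\pi_2$, and take \emph{both} cycles simultaneously: $c_1\subseteq \partial B\cup\partial H$ containing $\pi_1$ with $Ext(c_1)$ omitting $\pi_2$, and $c_2$ symmetrically. Then $Ext(c_1)\cup Ext(c_2)=G^\dagger$ and $Ext(c_1)\cap Ext(c_2)=B^\dagger$, so modularity of the freedom number gives $f(Ext(c_1))+f(Ext(c_2))=f(G^\dagger)+f(B^\dagger)=12$; since sparsity of $G^\dagger$ forces each summand to be at least $6$, both are exactly $6$, and both cycles are critical (non-facial because $v$ and $w$ are not joined by a $BH$ edge, so neither $c_i$ can bound a face). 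This complementarity --- playing the two candidate cycles off against each other so that sparsity alone pins down both counts --- is the missing idea in your single-cycle approach, and is what replaces the unproved ``arc selection'' step in your outline.
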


\proof
Suppose there exists a $H$-labelled face in $G$ which contains the vertices $v$ and $w$. 
The boundary of this $H$-labelled face is composed of two edge-disjoint paths $\pi_1$ and $\pi_2$ joining $v$ to $w$.
Let $c_1$ be the simple cycle in $\partial B\cup \partial H$ which contains the path $\pi_1$ and has the property that $Ext(c_1)$ does not contain the path $\pi_2$. Similarly, let $c_2$ be the simple cycle in $\partial B\cup \partial H$ which contains the path $\pi_2$ and has the property that $Ext(c_2)$ does not contain the path $\pi_1$.
See Figure \ref{f:commonH} for an illustration.
Note that 
$Ext(c_1)\cap Ext(c_2) = B^\dagger$. Thus, 
\[f(G^\dagger)=f(Ext(c_1))+f(Ext(c_2))-f(B^\dagger).\]
Since $f(G^\dagger)=f(B^\dagger)=6$, it follows that
$f(Ext(c_1))=f(Ext(c_2))=6$.
Hence $Ext(c_1)$ and $Ext(c_2)$ are both $(3,6)$-tight and so $c_1$ and $c_2$ are non-facial critical separating cycles for $G$.

\endproof

\begin{figure}
\definecolor{uququq}{rgb}{0.25,0.25,0.25}
\definecolor{zzttqq}{rgb}{0.6,0.2,0}
\definecolor{qqqqff}{rgb}{0,0,1}

\begin{tikzpicture}[line cap=round,line join=round,x=0.7cm,y=0.7cm]
\begin{scope}[shift={(1,3)},scale=0.6]
\clip(-8.1,-0.85) rectangle (17.52,8.88);
\draw  (-1.74,-0.08)-- (2.8,1.1);
\draw  (2.8,1.1)-- (5.07,3.85);
\draw  (5.07,3.85)-- (2.4,6.78);
\draw  (2.4,6.78)-- (-1.74,7.78);
\draw  (-1.74,7.78)-- (-3.01,3.85);
\draw [color=zzttqq] (-3.01,3.85)-- (-1.74,-0.08);
\draw  (-1.74,7.78)-- (-1.66,5.22);
\draw [color=blue] (2.66,5.34)-- (2.4,6.78);
\draw (2.66,5.34)-- (5.07,3.85);
\draw [color=blue] (2.66,5.34)-- (0.56,1.84);
\draw (0.56,1.84)-- (2.8,1.1);
\draw [color=blue] (0.56,1.84)-- (-1.74,-0.08);
\draw [color=zzttqq] (-1.66,5.22)-- (-3.01,3.85);

\draw [color=zzttqq] (-1.66,5.22) -- (2.4,6.78);
\draw  (2.66,5.34) -- (2.8,1.1);

\draw (-1.5,4) node[anchor=north west] {$ H $};
\draw (-4.1,6.84) node[anchor=north west] {$ B $};
\draw (2.1,7.7) node[anchor=north west] {$ v $};
\draw (-3,0.25) node[anchor=north west] {$ w $};

\begin{scriptsize}
\fill [color=uququq] (-1.74,-0.08) circle (2.5pt);
\fill [color=uququq] (2.8,1.1) circle (2.5pt);
\fill [color=uququq] (5.07,3.85) circle (2.5pt);
\fill [color=uququq] (2.4,6.78) circle (2.5pt);
\fill [color=uququq] (-1.74,7.78) circle (2.5pt);
\fill [color=uququq] (-3.01,3.85) circle (2.5pt);
\fill [color=uququq] (-1.66,5.22) circle (2.5pt);
\fill [color=uququq] (2.66,5.34) circle (2.5pt);
\fill [color=uququq] (0.56,1.84) circle (2.5pt);
\end{scriptsize}
\end{scope}

\begin{scope}[shift={(9,3)},scale=0.6]
\clip(-12.1,-0.85) rectangle (17.52,8.88);
\draw [color=zzttqq] (-1.74,-0.08)-- (2.8,1.1);
\draw [color=zzttqq] (2.8,1.1)-- (5.07,3.85);
\draw [color=zzttqq] (5.07,3.85)-- (2.4,6.78);
\draw [color=blue] (2.4,6.78)-- (-1.74,7.78);
\draw [color=blue] (-1.74,7.78)-- (-3.01,3.85);
\draw [color=zzttqq] (-2.95,3.85)-- (-1.7,-0.08);
\draw [color=blue] (-3.1,3.85)-- (-1.85,-0.08);
\draw  (-1.74,7.78)-- (-1.66,5.22);
\draw [color=blue] (2.66,5.34)-- (2.4,6.78);
\draw (2.66,5.34)-- (5.07,3.85);
\draw [color=blue] (2.66,5.34)-- (0.56,1.84);
\draw (0.56,1.84)-- (2.8,1.1);
\draw [color=blue] (0.56,1.84)-- (-1.74,-0.08);
\draw [color=zzttqq] (-1.66,5.22)-- (-3.01,3.85);

\draw [color=zzttqq] (-1.66,5.22) -- (2.4,6.78);
\draw  (2.66,5.34) -- (2.8,1.1);

\draw (-1.5,4) node[anchor=north west] {$ H $};
\draw (-4.1,6.84) node[anchor=north west] {$ B $};
\draw (2.1,7.7) node[anchor=north west] {$ v $};
\draw (-3,0.25) node[anchor=north west] {$ w $};

\begin{scriptsize}
\fill [color=uququq] (-1.74,-0.08) circle (2.5pt);
\fill [color=uququq] (2.8,1.1) circle (2.5pt);
\fill [color=uququq] (5.07,3.85) circle (2.5pt);
\fill [color=uququq] (2.4,6.78) circle (2.5pt);
\fill [color=uququq] (-1.74,7.78) circle (2.5pt);
\fill [color=uququq] (-3.01,3.85) circle (2.5pt);
\fill [color=uququq] (-1.66,5.22) circle (2.5pt);
\fill [color=uququq] (2.66,5.34) circle (2.5pt);
\fill [color=uququq] (0.56,1.84) circle (2.5pt);
\end{scriptsize}
\end{scope}

\end{tikzpicture}
\begin{caption}{An illustration of the proof of Lemma \ref{l:commonH}. The edge-disjoint paths $\pi_1$ and $\pi_2$ are indicated in red and blue on the left. The cycles $c_1$ and $c_2$ are indicated in red and blue on the right. }
    \label{f:commonH}
\end{caption}
\end{figure}

We will require the following result, known as the ``hole-filling" lemma. In the statement of the lemma, $int(c)$ denotes the subgraph of $G$ spanned by edges which lie inside the cycle $c$. 

\begin{lem}[{\cite[Lemma 26]{ckp}}]
\label{HoleFilling}
Let $G\in\G(1,n)$ and let $K'$ be a subgraph of $G^\dagger$. Suppose that $c$ is a simple cycle  in $K'\cap G$ with $E(K'\cap int(c)) =\emptyset$. 
If $K'$ is $(3,6)$-tight then $K'\cup int(c)$ is $(3,6)$-tight.
\end{lem}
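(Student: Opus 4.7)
The plan is to prove $(3,6)$-tightness of $K'\cup int(c)$ by establishing the freedom equality $f(K'\cup int(c))=6$; the required $(3,6)$-sparsity of the whole graph and of all its subgraphs is automatic, since $K'\cup int(c)\subseteq G^\dagger$ and $G^\dagger$ is $(3,6)$-tight by the hypothesis $G\in\G(1,n)$. The equality will follow from a two-sided bound: the inequality $f\geq 6$ from the inherited sparsity, and the inequality $f\leq 6$ from a direct count combined with the $(3,6)$-sparsity of the external discus-and-hole graph $Ext_G(c)$.

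Fix a planar realisation of $G$ with $B$ as the unbounded face. Since $E(K')\cap E(int(c))=\emptyset$, one has
\[
f(K'\cup int(c))=f(K')+3n-|E(int(c))|,
\]
where $n=|V(int(c))\setminus V(K')|$. Writing $N_{int}$ for the number of vertices of $G$ strictly interior to $c$, I would first show $n=N_{int}$ by arguing that $K'$ contains no vertex strictly interior to $c$: every edge of $G$ incident to such a vertex would, by planarity, lie inside $c$ and hence belong to $int(c)$, contradicting $E(K'\cap int(c))=\emptyset$; and the only edges of $G^\dagger\setminus G$ are the pole edges, whose non-pole endpoints all lie on $\partial B$, which is outside $c$.

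Next I would apply Euler's formula to the closed disc bounded by $c$, whose interior in $G$ is decomposed into some number of triangular faces together with $r\geq 0$ non-triangular $H$-labelled faces of boundary lengths $k_1,\ldots,k_r$, to obtain
\[
|E(int(c))|=3N_{int}+(|c|-3)-\sum_{i=1}^{r}(k_i-3),
\]
and hence $f(K'\cup int(c))=6+\sum_{i=1}^{r}(k_i-3)-(|c|-3)$. An entirely parallel computation for $Ext_G(c)=G_1\cup B^\dagger$ (using $f(G)=2|\partial B|$, which follows from $G\in\G(1,n)$) yields $f(Ext_G(c))=6+(|c|-3)-\sum_{i=1}^{r}(k_i-3)$. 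Because $Ext_G(c)$ is a subgraph of the $(3,6)$-tight graph $G^\dagger$, it is $(3,6)$-sparse, so $f(Ext_G(c))\geq 6$, which rearranges to $\sum_{i=1}^{r}(k_i-3)\leq |c|-3$. Substituting back gives $f(K'\cup int(c))\leq 6$, and combined with the inherited bound $f\geq 6$ this forces equality.

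The step I expect to be the main obstacle is the verification that $K'$ has no vertex strictly inside $c$: it is here that planarity of $G$, the edge-disjointness hypothesis, and the structural feature that the poles in $G^\dagger$ are adjacent only to $\partial B$ must all be used simultaneously. The remainder of the argument is inclusion-exclusion together with two parallel applications of Euler's formula, in the same freedom-accounting spirit as the proof of Lemma~\ref{l:commonH}.
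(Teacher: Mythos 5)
Your argument is correct, but note that the paper itself offers no proof to compare against: Lemma \ref{HoleFilling} is imported verbatim from \cite[Lemma 26]{ckp}, so your write-up has to stand on its own, and it does. The reduction to the single equality $f(K'\cup int(c))=6$ is exactly right, since all sparsity counts are inherited from the $(3,6)$-tight graph $G^\dagger$. Your key claim that $V(K')$ meets no vertex strictly interior to $c$ is also sound, with one one-line supplement: your argument only excludes vertices of $K'$ that carry an edge, so you should add that a $(3,6)$-tight graph has no isolated vertices (deleting one would leave a subgraph with at least two edges and freedom number $3<6$); with that, every vertex of $K'$ inside $c$ would force an edge of $K'$ into $int(c)$ (pole edges are ruled out because strictly interior vertices cannot lie on $\partial B$, the boundary of the unbounded face), contradicting $E(K'\cap int(c))=\emptyset$. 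Both of your Euler-formula counts check out, but they are heavier than necessary and in fact cancel each other: writing $D=c\cup int(c)$, you have $Ext_G(c)\cup D=G^\dagger$ and $Ext_G(c)\cap D=c$, so $f(D)=6+f(c)-f(Ext_G(c))$, while $K'\cap D=c$ (this is where the interior-vertex claim is used) gives $f(K'\cup int(c))=f(K')+f(D)-f(c)=12-f(Ext_G(c))$. Since both $K'\cup int(c)$ and $Ext_G(c)$ are subgraphs of $G^\dagger$ with at least two edges, each has freedom number at least $6$, and as they sum to $12$ both equal $6$. This two-line inclusion--exclusion replaces the face-by-face bookkeeping with the $k_i$, avoids any discussion of which faces lie inside $c$, and yields the bonus observation that $c$ is automatically a critical separating cycle whenever such a tight $K'$ exists.
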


\begin{lem}
    \label{lem:TightSubgraphTwo}
     Let $G\in \mathcal G(1,n)$. Suppose that \( K' \) is a \( (3,6) \)-tight subgraph of 
\( G^\dagger \) with \( B^\dagger \subset K' \) and let \( K = K' \cap G \). 
Label the face of $K$ corresponding to \( B^\dagger\) by \( B \)
and every other non-triangular face by \( H \).
Then,
\begin{enumerate}[(i)]
\item \( K \) is a face graph.
\item The boundary cycle of every $H$-labelled face in \( K \) is either 
    the boundary of a $H$-labelled face in \( G \) or is a 
    non-facial critical separating cycle in \( G \).
    \end{enumerate}
\end{lem}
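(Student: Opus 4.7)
The plan is to first establish $2$-connectivity of both $K'$ and $K$, use this to obtain (i) from planarity, and then prove (ii) by iterated application of Lemma \ref{HoleFilling}.

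To show $K'$ has no cut vertex $v$, decompose $K' = K_1 \cup \cdots \cup K_r$ with $K_i \cap K_j = \{v\}$ for $i \neq j$, so that $f(K') = \sum_i f(K_i) - 3(r-1)$. Since $(3,6)$-tightness forces minimum degree at least three (a standard sparsity argument), each $K_i$ contains at least two edges, hence $f(K_i) \geq 6$; then $f(K') \geq 3r+3$ forces $r = 1$. Passing to $K$: the cycle $\partial B \subseteq K$ has the property that $\partial B \setminus \{v\}$ is connected in $K$ for every vertex $v$. Any component of $K$, or of $K \setminus \{v\}$, not meeting $\partial B$ would be unreachable from $B^\dagger$ in $K'$ (respectively $K' \setminus \{v\}$), since $V(B^\dagger) \cap V(K) = V(\partial B)$; this would contradict the $2$-connectivity of $K'$, so $K$ is $2$-connected. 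Consequently, every face of the planar embedding of $K$ (inherited from $G$) is bounded by a simple cycle, and each non-triangular face $F$ of $K$ is bounded by a simple cycle $c$ in $G$ whose interior in the underlying triangulated sphere is a simplicial disc. These simplicial discs are pairwise internally disjoint (since the non-triangular faces of $K$ are planarly disjoint), so labelling $F_B$ by $B$ and the remaining non-triangular faces by $H$ exhibits $K$ as a face graph, giving (i).

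For (ii), fix a non-triangular face $F \neq F_B$ of $K$ with boundary $c$. If $c$ bounds an $H$-labelled face of $G$ we are done; otherwise, since $|c| \geq 4$ and $c \neq \partial B$, the cycle $c$ is non-facial in $G$. Starting from $K'$, I apply Lemma \ref{HoleFilling} iteratively to each other non-triangular face $F' \neq F, F_B$ of $K$, with boundary $c'$, filling in $int_G(c')$. At each stage the current graph has no edges strictly inside $c'$ (since faces of $K$ are planarly disjoint, the previously added interiors sit outside $c'$) and $c'$ is a simple cycle of the current graph, so $(3,6)$-tightness is preserved. A direct edge- and vertex-level bookkeeping identifies the resulting graph with $Ext_G(c)$; hence $Ext_G(c)$ is $(3,6)$-tight and $c$ is a non-facial critical separating cycle, as required.

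The main obstacle I anticipate is the $2$-connectivity of $K$ in the second step, for which the key observation is that $\partial B \setminus \{v\}$ remains connected in $K$ uniformly in $v$, avoiding a case split on whether $v \in \partial B$. The iterated hole-filling in the third step is essentially bookkeeping once one verifies that the interior vertices of $c$ are absent from both the final iterated graph and $Ext_G(c)$.
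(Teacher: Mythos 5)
Your proposal is correct and follows essentially the same route as the paper's proof: part (i) rests on the observation that the $(3,6)$-tight graph $K'$ has no cut vertex, so the face boundaries of $K$ are simple cycles, and part (ii) applies the hole-filling Lemma \ref{HoleFilling} to pass from $K'$ to $Ext_G(c)=G_1^\dagger$. The differences are purely expository: you prove the no-cut-vertex claim by an explicit freedom-number count and transfer $2$-connectivity to $K$, and you spell out the iterated hole-filling bookkeeping, all of which the paper leaves implicit.
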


\begin{proof}
    $(i)$ We need to show that the boundary cycle of each $H$-labelled face of $K$ is simple. If this were not the case then the boundary cycle of some face of $K$ would contain a repeated vertex. Note that this repeated vertex is a cut vertex for $K$. It is also a cut vertex for $K'$. However, \( K' \) does not have a cut vertex since it is \( (3,6) \)-tight. 
    
    $(ii)$ Suppose $c$ is the boundary cycle of a $H$-labelled face in $K$ which is not a $H$-labelled face in $G$. Let $G_1$ be the external face graph associated with $c$. Note that the external discus-and-hole graph  $G_1^\dagger$ is obtained from $K'$ by ``filling in" $H$-labelled faces of $K$. Since $K'$ is $(3,6)$-tight, by the hole-filling lemma (Lemma \ref{HoleFilling}), $G_1^\dagger$ is also $(3,6)$-tight. Thus, $c$ is a non-facial critical separating cycle in \( G \).  
\end{proof}

We will require the following result, known as the {\em isostatic substitution principle}. See \cite[Corollary 2.8]{whiteley} and the more general form \cite[Corollary 2.6]{frw}.

\begin{lem}
\label{l:isp}
Let $K$ be a simple graph which is minimally $3$-rigid and let $K'$ be a vertex induced subgraph of $K$ which is also minimally $3$-rigid. If $K'$ is replaced with another minimally $3$-rigid graph $K''$ with the property that $V(K')\subseteq V(K'')$ then the resulting graph is minimally $3$-rigid.
\end{lem}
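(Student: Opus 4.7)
The plan is to work with infinitesimal rigidity at a generic realisation. Write $\tilde K$ for the graph obtained from $K$ by removing the edges of $K'$ and adjoining $K''$, so that $V(\tilde K)=V(K)\cup V(K'')$ and $E(\tilde K)=(E(K)\setminus E(K'))\cup E(K'')$. First I would fix a generic infinitesimally rigid placement $p$ of $K$ in $\bR^3$ and extend it to a placement $\tilde p$ of $\tilde K$ by choosing generic positions for the vertices of $V(K'')\setminus V(K')$. Since $K''$ is minimally $3$-rigid and $p|_{V(K')}$ is generic, this extension can be arranged so that $K''$ at $\tilde p|_{V(K'')}$ is infinitesimally rigid.

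The key step is to show that an arbitrary infinitesimal flex $\dot p$ of $\tilde K$ at $\tilde p$ is trivial. Its restriction to $V(K'')$ is an infinitesimal flex of $K''$ and so, by infinitesimal rigidity of $K''$, coincides with some infinitesimal rigid motion $T$ of $\bR^3$. Subtracting $T$ from $\dot p$ (which yields another infinitesimal flex of $\tilde K$), we may assume $\dot p\equiv 0$ on $V(K'')$, and in particular on $V(K')$. The restriction $\dot p|_{V(K)}$ then satisfies the edge constraints of $E(K)\setminus E(K')\subset E(\tilde K)$ by hypothesis, and those of $E(K')$ are trivially satisfied since both endpoints have zero velocity. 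Hence $\dot p|_{V(K)}$ is an infinitesimal flex of $K$, which by infinitesimal rigidity of $K$ must be an infinitesimal rigid motion; but this motion vanishes on the three or more non-collinear points of $\tilde p(V(K'))$, and so is zero. Thus $\dot p\equiv 0$ and $\tilde K$ is infinitesimally rigid.

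For minimality, the vertex and edge counts
\[
|V(\tilde K)|=|V(K)|+|V(K'')|-|V(K')|, \qquad |E(\tilde K)|=|E(K)|-|E(K')|+|E(K'')|,
\]
together with $3|V(K)|-|E(K)|=3|V(K')|-|E(K')|=3|V(K'')|-|E(K'')|=6$, give $3|V(\tilde K)|-|E(\tilde K)|=6$. An infinitesimally rigid graph satisfying this Maxwell count is minimally $3$-rigid.

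The main obstacle is the initial genericity step: one must ensure that a generic realisation of $K$ can be simultaneously extended to a realisation at which $K''$ is also infinitesimally rigid. This follows because infinitesimal rigidity is a Zariski-open condition on the placement space and $K''$ is minimally $3$-rigid, so the set of placements of $V(K'')$ at which $K''$ is infinitesimally rigid is a nonempty Zariski-open subset; projecting onto the coordinates of $V(K')$ shows that for a generic $p|_{V(K')}$ an appropriate extension to $V(K'')\setminus V(K')$ exists. Once this point is handled, the rest of the argument is the straightforward combination of flex subtraction and the Maxwell count sketched above.
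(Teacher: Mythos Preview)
Your argument is correct. The paper does not actually prove this lemma; it simply states it and cites \cite[Corollary~2.8]{whiteley} and \cite[Corollary~2.6]{frw} for the isostatic substitution principle. Your self-contained proof via infinitesimal flexes at a generic placement, followed by the Maxwell count for minimality, is precisely the standard way such results are established in those references, so there is no real methodological difference to compare.

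One small point worth making explicit: when you conclude that the rigid motion of $K$ must vanish because it is zero on ``three or more non-collinear points of $\tilde p(V(K'))$'', you are implicitly using $|V(K')|\geq 3$. This holds in every application of the lemma in the paper (the substituted subgraphs always contain a boundary cycle of length at least four), and is the usual convention for minimally $3$-rigid graphs, but it is not forced by the bare definition of infinitesimal rigidity for frameworks with fewer than three vertices. Your genericity discussion at the end is also slightly more elaborate than necessary: simply taking $\tilde p$ to be an algebraically generic placement of $V(\tilde K)$ in $\bR^3$ makes both $K$ and $K''$ infinitesimally rigid at the relevant restrictions, with no need to argue via projections of Zariski-open sets.
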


\begin{lem}
\label{l:csc}
Let $G\in \mathcal{G}(1,n)$. Suppose $c$ is a non-facial critical separating cycle for $G$ with internal face graph $G_2$. If $d$ is a critical separating cycle for $G_2$ then $d$ is also a critical separating cycle for $G$.
\end{lem}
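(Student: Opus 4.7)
The plan is to verify directly that $Ext_G(d)$ is $(3,6)$-tight, which is precisely the definition of $d$ being a critical separating cycle for $G$. I would treat the freedom count and the $(3,6)$-sparsity of $Ext_G(d)$ in two independent steps.

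For the freedom count, I would exploit the set-theoretic identity
\[
Ext_G(c) \cup Ext_{G_2}(d) \;=\; Ext_G(d) \cup B_c^\dagger,
\]
where $B_c^\dagger$ is the simplicial discus at $c$ that appears as a subgraph of $Ext_{G_2}(d)$. In both decompositions, the two graphs on each side intersect in exactly the cycle $c$: on the left because the exterior of $c$ in $G$ and the interior face graph $G_2$ share only the boundary cycle, with disjoint discus-poles; and on the right because the poles of $B_c^\dagger$ are fresh vertices not present in $Ext_G(d)$. Applying the inclusion--exclusion identity $f(X \cup Y) = f(X) + f(Y) - f(X \cap Y)$ to both expressions for $f(Ext_G(c) \cup Ext_{G_2}(d))$ gives
\[
f(Ext_G(c)) + f(Ext_{G_2}(d)) - f(c) \;=\; f(Ext_G(d)) + f(B_c^\dagger) - f(c).
\]
Substituting $f(Ext_G(c)) = f(Ext_{G_2}(d)) = 6$ (from the critical-separating hypotheses on $c$ and $d$) and $f(B_c^\dagger) = 6$ (since the discus is minimally $3$-rigid) yields $f(Ext_G(d)) = 6$.

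For $(3,6)$-sparsity, I would observe that $Ext_G(d)$ sits as a subgraph of $G^\dagger$: the external face graph of $d$ in $G$ is obtained from $G$ by deleting vertices and edges strictly interior to $d$, hence is a subgraph of $G$ as an abstract simple graph; and the discus at the original $B$-face appears with the same poles in both $Ext_G(d)$ and $G^\dagger$. Because $G \in \mathcal{G}(1,n)$, the graph $G^\dagger$ is by definition $(3,6)$-tight and hence $(3,6)$-sparse, and since $(3,6)$-sparsity is inherited by subgraphs, $Ext_G(d)$ is $(3,6)$-sparse.

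Combining the freedom identity with sparsity gives that $Ext_G(d)$ is $(3,6)$-tight, which is exactly the statement that $d$ is a critical separating cycle for $G$. The main point requiring careful verification is the set-theoretic identity $Ext_G(c) \cup Ext_{G_2}(d) = Ext_G(d) \cup B_c^\dagger$ together with the two intersection claims; these rely on the planar picture (the original $B$-face lies exterior to both $c$ and $d$) and on keeping the pole vertices introduced by the two distinct discus constructions disjoint. Once those set-theoretic facts are in hand, both the inclusion--exclusion computation and the subgraph sparsity argument are routine.
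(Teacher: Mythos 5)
Your proof is correct, but it takes a genuinely different route from the paper. The paper's proof is rigidity-theoretic: it invokes Theorem \ref{t:ckpthm} to convert the $(3,6)$-tightness of $Ext_{G_2}(d)$ and $Ext_G(c)$ into minimal $3$-rigidity, observes that $Ext_G(d)$ is obtained from $Ext_{G_2}(d)$ by replacing the discus $B_c^\dagger$ with $Ext_G(c)$, applies the isostatic substitution principle (Lemma \ref{l:isp}) to conclude $Ext_G(d)$ is minimally $3$-rigid, and converts back to tightness. You exploit exactly the same structural observation --- encoded in your identity $Ext_G(c)\cup Ext_{G_2}(d)=Ext_G(d)\cup B_c^\dagger$ with both intersections equal to $c$ --- but you process it purely combinatorially: inclusion--exclusion on the freedom number gives $f(Ext_G(d))=6$, and $(3,6)$-sparsity comes for free since $Ext_G(d)\subseteq G^\dagger$ and sparsity is hereditary. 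Your argument is more elementary and self-contained (it does not rely on Theorem \ref{t:ckpthm}, whose proof lives in \cite{ckp}, nor on any rigidity theory), at the cost of the careful planar set-theoretic verifications you rightly flag; the paper's version is shorter given the machinery already assembled, and the substitution idea it uses recurs elsewhere in the paper (e.g.\ in Lemma \ref{l:terminal} and Claim \ref{cl:FourEdgesSurvive}). One shared caveat: both proofs implicitly assume $|c|\geq 4$, since for $|c|=3$ the internal face graph $G_2$ has no $B$-labelled face, so neither the discus $B_c^\dagger$ in your identity nor the ``discus in $Ext_{G_2}(d)$'' in the paper's substitution exists (and indeed the notion of a critical separating cycle for $G_2$ presupposes $G_2\in\mathcal{G}(1,m)$); this is harmless where the lemma is applied, because there $G$ contains no non-facial $3$-cycles by Lemma \ref{l:terminal}.
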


\proof
 By Theorem \ref{t:ckpthm}, the  discus-and-hole graphs $Ext_{G_2}(d)$ and $Ext_G(c)$ are minimally $3$-rigid. Note that  $Ext_G(d)$ is obtained  by replacing the discus in  $Ext_{G_2}(d)$ with $Ext_G(c)$. Thus, by the isostatic substitution principle (Lemma \ref{l:isp}), since $Ext_{G_2}(d)$ is minimally $3$-rigid,   
 $Ext_G(d)$ is also minimally $3$-rigid. 
 We conclude that $d$ is  a critical separating cycle for $G$.
\endproof

We now present a key technical lemma which is needed for the proof of Theorem
\ref{thm:MainImproved}  below.

\begin{figure}
\definecolor{zzttqq}{rgb}{0.6,0.2,0}
\definecolor{xdxdff}{rgb}{0.49,0.49,1}
\definecolor{qqqqff}{rgb}{0,0,1}
\begin{tikzpicture}[line cap=round,line join=round,x=1.0cm,y=1.0cm,scale=0.8]
\clip(-6.56,-1.1) rectangle (22.16,6.96);
\fill[color=zzttqq,fill=zzttqq,fill opacity=0.1] (-4,4.02) -- (-3.82,1.32) -- (-0.52,-0.14) -- (4.08,-1.06) -- (5.76,1.26) -- (5.98,3.82) -- (4.9,5.5) -- (2.3,6.4) -- (-0.92,6.42) -- (-3.32,6.12) -- cycle;
\draw(-0.8,3.48) circle (2.41cm);
\draw(2.62,2.64) circle (2.22cm);
\draw [shift={(2.62,2.64)},line width=2pt]  plot[domain=2.59:2.88,variable=\t]({1*2.22*cos(\t r)+0*2.22*sin(\t r)},{0*2.22*cos(\t r)+1*2.22*sin(\t r)});
\draw [shift={(2.62,2.64)},line width=2pt]  plot[domain=1.09:1.41,variable=\t]({1*2.22*cos(\t r)+0*2.22*sin(\t r)},{0*2.22*cos(\t r)+1*2.22*sin(\t r)});
\draw [color=zzttqq] (-4,4.02)-- (-3.82,1.32);
\draw [color=zzttqq] (-3.82,1.32)-- (-0.52,-0.14);
\draw [color=zzttqq] (-0.52,-0.14)-- (4.08,-1.06);
\draw [color=zzttqq] (4.08,-1.06)-- (5.76,1.26);
\draw [color=zzttqq] (5.76,1.26)-- (5.98,3.82);
\draw [color=zzttqq] (5.98,3.82)-- (4.9,5.5);
\draw [color=zzttqq] (4.9,5.5)-- (2.3,6.4);
\draw [color=zzttqq] (2.3,6.4)-- (-0.92,6.42);
\draw [color=zzttqq] (-0.92,6.42)-- (-3.32,6.12);
\draw [color=zzttqq] (-3.32,6.12)-- (-4,4.02);
\begin{scriptsize}
\fill [color=qqqqff] (0.48,3.22) circle (1.5pt);
\fill [color=xdxdff] (0.73,3.79) circle (1.5pt);
\fill [color=xdxdff] (2.96,4.83) circle (1.5pt);
\fill [color=xdxdff] (3.65,4.6) circle (1.5pt);
\draw[color=black] (0.38,3.54) node {$f$};
\draw[color=black] (3.54,4.86) node {$e$};
\draw[color=black] (2.24,0.76) node {$d$};
\draw[color=black] (-2.24,1.76) node {$c$};
\fill [color=qqqqff] (-4,4.02) circle (1.5pt);
\fill [color=qqqqff] (-3.82,1.32) circle (1.5pt);
\fill [color=qqqqff] (-0.52,-0.14) circle (1.5pt);
\fill [color=qqqqff] (4.08,-1.06) circle (1.5pt);
\fill [color=qqqqff] (5.76,1.26) circle (1.5pt);
\fill [color=qqqqff] (5.98,3.82) circle (1.5pt);
\fill [color=qqqqff] (4.9,5.5) circle (1.5pt);
\fill [color=qqqqff] (2.3,6.4) circle (1.5pt);
\fill [color=qqqqff] (-0.92,6.42) circle (1.5pt);
\fill [color=qqqqff] (-3.32,6.12) circle (1.5pt);
\end{scriptsize}
\end{tikzpicture}
\begin{caption}{Lemma \ref{lem:TightSubgraphThree}.}
    \label{fig:TightLemma}
\end{caption}

\end{figure}

\begin{lem}
    \label{lem:TightSubgraphThree}
    Let $G\in \mathcal G(1,n)$ and let \( c \) be a  critical separating cycle for \( G \) of length $|c|\geq 4$, with associated external and internal face graphs $G_1$ and $G_2$. Let $e$ be a $TT$ edge in $G_1$ and let $f$ be a $TT$ edge in $G_2$.
    \begin{enumerate}[(i)]
        \item If $e$ lies in a non-facial critical separating cycle for $G$ then $e$ also lies in a non-facial critical separating cycle for \( G_1 \).
        \item If $f$ lies in a non-facial critical separating cycle for $G$ then $f$ also lies in a non-facial critical separating cycle for \( G_2 \).
    \end{enumerate}
\end{lem}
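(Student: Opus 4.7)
The plan is to deduce both parts from Lemma \ref{lem:TightSubgraphTwo} by exhibiting, in each case, a $(3,6)$-tight subgraph of $G_1^\dagger$ (respectively $G_2^\dagger$) that contains the appropriate block discus and has the specified edge on the boundary of one of its $H$-labelled faces.

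For part (i), let $c'$ be a non-facial critical separating cycle of $G$ with $e\in c'$, and set $K' := Ext_G(c') \cap G_1^\dagger$. Both $Ext_G(c')$ and $G_1^\dagger = Ext_G(c)$ are $(3,6)$-tight subgraphs of $G^\dagger$, each with freedom number $6$; since their union is a subgraph of the $(3,6)$-sparse graph $G^\dagger$, inclusion--exclusion on freedom numbers forces $f(K') = 6$, and because $K'\subseteq G_1^\dagger$ is $(3,6)$-sparse, $K'$ is $(3,6)$-tight. Since $B^\dagger\subseteq K'$ and $G_1\in\G(1,n')$ (as $G_1^\dagger$ is $(3,6)$-tight), Lemma \ref{lem:TightSubgraphTwo} presents $K := K'\cap G_1$ as a face graph whose $H$-face boundaries are each either the boundary of an $H$-face of $G_1$ or a non-facial critical separating cycle of $G_1$. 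Now $e\in K$, and of the two triangles of $G_1$ adjacent to $e$, exactly one is interior to $c'$ and is excised in $K$, so $e$ lies on the boundary of a non-triangular $H$-face of $K$. Because $e$ is $TT$ in $G_1$, this $H$-face is not an $H$-face of $G_1$, so by the lemma its boundary is a non-facial critical separating cycle of $G_1$ containing $e$.

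Part (ii) proceeds in parallel, using
\[
K' := (Ext_G(c') \cap G_2) \cup B_c^\dagger \,\subseteq\, G_2^\dagger,
\]
which contains the discus $B_c^\dagger$ over the $B$-face of $G_2$. Writing $c^{ext} := Ext_G(c') \cap c$ and decomposing $Ext_G(c') = (Ext_G(c') \cap G_1^\dagger) \cup (Ext_G(c') \cap G_2)$ (whose intersection is $c^{ext}$), inclusion--exclusion yields $f(Ext_G(c') \cap G_2) = f(c^{ext})$ and then $f(K') = f(c^{ext}) + 6 - f(c^{ext}) = 6$. Tightness follows from $G_2\in\G(1,m)$, which holds whenever $c$ is a critical separating cycle. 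The rest of the argument---applying Lemma \ref{lem:TightSubgraphTwo} to $K' \subseteq G_2^\dagger$ and repeating the same local analysis at $f$---is identical to part (i).

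The main obstacle is the local verification that $e$ (respectively $f$) lies on an $H$-face boundary of $K$ rather than in its interior: one must show that the triangle of $G_1$ (respectively $G_2$) on the interior side of $c'$ at the edge is genuinely removed in $K$, which takes some care in configurations where $c'$ uses two consecutive edges of that triangle. The freedom-number inclusion--exclusion is otherwise routine, with a minor subtlety in part (ii) since $B_c^\dagger$ is not a subgraph of $G^\dagger$.
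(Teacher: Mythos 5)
Your proof is correct, and part (i) is the paper's own argument: your $K'=Ext_G(c')\cap G_1^\dagger$ is exactly the paper's $K'=Ext(c)\cap Ext(d)$, analysed by the same modularity of the freedom number (the union lies in the $(3,6)$-sparse graph $G^\dagger$, forcing $f(K')=6$) and the same application of Lemma \ref{lem:TightSubgraphTwo} relative to $G_1$. For part (ii) the paper offers only the one-line sketch ``apply similar arguments to $L'=Ext(c)\cup Ext(d)$'', and your construction $K'=(Ext_G(c')\cap G_2)\cup B_c^\dagger$ is the honest unpacking of that sketch: to invoke Lemma \ref{lem:TightSubgraphTwo} relative to $G_2$ one needs a $(3,6)$-tight subgraph of $G_2^\dagger$ containing the discus over the $B$-face of $G_2$, and indeed $K'\cap G_2$ equals $L'\cap G_2$, so your $K'$ is precisely $L'$ with the external part replaced by the discus. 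What your route makes explicit, and the paper leaves implicit, is the substitution step: sparsity of your $K'$ rests on $G_2^\dagger$ being $(3,6)$-tight whenever $c$ is critical, which follows from Theorem \ref{t:ckpthm} and the isostatic substitution principle (Lemma \ref{l:isp}), exactly as the paper later argues inside the Claim in the proof of Theorem \ref{thm:MainImproved}; your two-step inclusion--exclusion through $c^{ext}$ then replaces the paper's single count $f(K')+f(L')=f(Ext(c))+f(Ext(d))=12$, at the small price of re-using part (i) to get $f(Ext_G(c')\cap G_1^\dagger)=6$. Finally, the local point you flag can be settled as the paper tacitly does (``by construction''): since $c'$ is non-facial it cannot contain all three edges of the triangle of $G_1$ (respectively $G_2$) on its interior side at the given edge, so at least one edge of that triangle is strictly interior to $c'$ and is deleted, whence the edge borders a face of $K$ strictly larger than that triangle; both you and the paper elide the residual check that this face is non-triangular, but even in that degenerate case the hole-filling argument underlying Lemma \ref{lem:TightSubgraphTwo}(ii) still certifies its boundary as a non-facial critical separating cycle, so nothing is lost.
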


\begin{proof}
    $(i)$ Suppose $d$ is a non-facial critical separating cycle for $G$ which contains the edge $e$ (see Figure \ref{fig:TightLemma} for an illustration).
    Let \( K' = Ext(c)\cap Ext(d) 
    \) and let \( K = K' \cap G \). 
    Similarly, let \( L' = Ext(c) \cup Ext(d) \) and let 
    \( L = L' \cap G \).
    Observe that,
    \[ f(K')+f(L') = f(Ext(c)) +f(Ext(d)) = 12.\]
    Therefore \( f(K') = f(L')= 6 \) and so $K'$ and $L'$ are  $(3,6)$-tight subgraphs of $G^\dagger$ which contain 
    \( B^\dagger\). 
    Label the face of $K$ corresponding to  \( B^\dagger \) by \( B \)
and every other non-triangular face of $K$ by \( H \). 
    Note that, since $|c|\geq 4$, \( e \) lies on the boundary cycle of a $H$-labelled face of \( K \) by construction. Let \( d' \) be this boundary cycle.
    Since \( e \) is a \( TT \) edge in \( G_1 \), \( d' \) cannot 
    be the boundary of a face in \( G_1 \).
    Therefore, by Lemma \ref{lem:TightSubgraphTwo},
    \( d'  \) is a non-facial critical separating cycle for \( G_1 \).
    This proves part   $(i)$. 
    Part $(ii)$ is proved by applying similar arguments to \( L \).
\end{proof}

\subsection{On indivisible graphs in \( \mathcal G(1,n) \)}
In this section, we derive  properties of face graphs in \( \mathcal G(1,n) \) which contain no $TT$ edges and no non-facial critical separating cycles. 

\begin{definition}
A face graph $G$ in $\G(1,n)$ is  \emph{indivisible} if every critical separating cycle for $G$ is the boundary cycle of a face of $G$.
\end{definition}

\begin{lem}
    \label{lem:LessThanTwoBHandIndivisibleImpliesContractibleTT}
    Suppose that \( G \in \mathcal G(1,n) \) has no  \( TT \) edge and is also indivisible. 
    Then \( G \) has at least three \( BH \) edges. 
\end{lem}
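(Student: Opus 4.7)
The plan is to argue by contradiction: assume that $G$ has at most two $BH$ edges and combine two linear inequalities relating the number $n$ of $H$-labelled faces, the boundary length $b := |\partial B|$, and the number $e$ of $BH$ edges in $G$ to reach a contradiction.

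The first ingredient is a no-chord property for $\partial B$: because $B^\dagger$ is $(3,6)$-tight with freedom number exactly $6$, any edge of $G$ joining two non-adjacent vertices of $\partial B$ would, together with $B^\dagger$, give a subgraph of $G^\dagger$ of freedom number $5$, violating the $(3,6)$-sparsity of $G^\dagger$. Hence $\partial B$ has no chord in $G$.

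The second and main combinatorial step is to show that every $v \in V(\partial B)$ lies on the boundary of some $H$-face, using a case analysis on the cyclic sequence of interior faces at $v$. If either $\partial B$-edge at $v$ is a $BH$ edge then $v$ sits on the corresponding hole. Otherwise both $\partial B$-edges at $v$ are $TB$, so the interior face sequence begins and ends with triangles; two adjacent triangles in the sequence would share an interior edge at $v$, producing a $TT$ edge, whereas if the interior side of $v$ contained a single triangular face then its third edge would be a chord of $\partial B$. The no-$TT$ hypothesis and the no-chord property then force the interior sequence at $v$ to contain an $H$-face incident to $v$.

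Now apply indivisibility through Lemma \ref{l:commonH}: no hole can contain three or more vertices of $\partial B$, since otherwise all pairs would be joined by $BH$ edges, forcing three mutually adjacent vertices on the simple cycle $\partial B$ of length $\geq 4$. Letting $n_1, n_2$ denote the numbers of holes containing precisely one, or two, vertices of $\partial B$, each hole counted by $n_2$ contributes exactly one $BH$ edge, so $e = n_2$. Summing the estimate from the second step over $V(\partial B)$ gives
\[ b \;\leq\; \sum_{v \in V(\partial B)} \#\{i : v \in V(\partial H_i)\} \;=\; n_1 + 2 n_2 \;=\; n_1 + 2 e, \]
whence $n \geq n_1 + n_2 \geq b - e$. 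On the other hand, combining $(3,6)$-tightness of $G^\dagger$ with Euler's formula and the face-edge handshake identity for $G$ produces the identity $\sum_{i=1}^{n} h_i = b + 3n - 3$, where $h_i$ is the length of the $i$-th hole; since every $h_i \geq 4$, this forces $n \leq b - 3$. Combining these yields $b - e \leq n \leq b - 3$, and hence $e \geq 3$, the required contradiction. The main technical obstacle is the second step: in the low-degree configurations $\deg_G v \in \{2, 3\}$, the no-chord and no-$TT$ hypotheses respectively must be invoked with care to rule out degenerate local behaviour.
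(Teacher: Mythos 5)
Your overall architecture is sound and genuinely different from the paper's: where the paper runs a case analysis on the configurations of at most two $BH$ edges and uses the identity $|\partial B|-3=\sum_{H}(|\partial H|-3)$ to force two boundary vertices sharing a hole without a joining $BH$ edge (then contradicts indivisibility via Lemma \ref{l:commonH}), you extract structural facts (every vertex of $\partial B$ lies on a hole; no hole meets $\partial B$ in three vertices) and finish with one global count. Your supporting steps check out: the no-chord property (a chord plus $B^\dagger$ has freedom number $5$), the local face-sequence argument at a boundary vertex (valid since $|\partial B|\geq 4$), the bound ``at most two boundary vertices per hole'', and both inequalities $n\leq b-3$ and $b\leq n_1+2n_2$ are all correct.

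However, there is a genuine gap at the step ``each hole counted by $n_2$ contributes exactly one $BH$ edge, so $e=n_2$.'' What indivisibility plus Lemma \ref{l:commonH} gives you is that the two boundary vertices $v,w$ of an $n_2$-hole are \emph{joined by} a $BH$ edge; it does not give you that this edge lies on that hole's boundary, nor that distinct $n_2$-holes yield distinct edges. Two distinct holes could a priori both meet $\partial B$ exactly in the same pair $\{v,w\}$ (one being the interior face of the edge $vw$, the other meeting $v,w$ without containing that edge), and indivisibility is powerless here precisely because Lemma \ref{l:commonH} requires the pair \emph{not} to be joined by a $BH$ edge. In that scenario $n_2>e$, and it is exactly the direction $n_2\leq e$ that your chain $n\geq n_1+n_2\geq b-e$ needs. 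The good news is that your count is salvageable without proving any injectivity: you only need $n_2\geq e$ (the interior face of each $BH$ edge is an $n_2$-hole, and a hole with only two boundary vertices carries at most one $BH$ edge, so this assignment is injective) together with $n_1\geq b-2e$ (a boundary vertex incident to no $BH$ edge lies, by your ``at most two'' rule and indivisibility, only on holes containing no other vertex of $\partial B$, and such holes are distinct for distinct vertices). Then $n\geq n_1+n_2\geq (b-2e)+e=b-e$, and $n\leq b-3$ yields $e\geq 3$ as desired. As written, though, the asserted equality $e=n_2$ is unsupported and is the one step of the argument that would fail.
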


\begin{proof}

By \cite[Proposition 22(ii)]{ckp}, $G$ must contain at least one $BH$ edge. The cases where $G$ contains exactly one $BH$ edge and exactly two $BH$ edges are considered below. Since there are no $TT$ edges in $G$, for each vertex $v$ of $\partial B$ there exists a $H$-labelled face $H_v\in\H$ which contains $v$. The set of all $H$-labelled faces of $G$ is denoted by $\H$. Since $f(G^\dagger)=6$ it follows that $|\partial B| - 3 =\sum_{H\in\H}(|\partial H|-3)$.  
	
Case 1: Suppose $G$ contains exactly one $BH$ edge $e$. Then the vertices of $e$ are contained in a common $H$-labelled face $H_e$.
If the remaining $r=|\partial B|-2$ vertices $v_1,v_2,\ldots, v_r$ in $\partial B$ are each contained in distinct $H$-labelled faces then we obtain the contradiction, 
\[|\partial B| - 3 =\sum_{H\in\H}(|\partial H|-3) \geq (|\partial H_e|-3) + \sum_{i=1}^r (|\partial H_{v_i}|-3)\geq r+1.\] 

Case 2: Suppose $G$ contains exactly two $BH$ edges $e$ and $f$ and that these edges are adjacent.
The vertices of $e$ are contained in a common $H$-labelled face $H_e$. 
If the remaining $r=|\partial B|-3$ vertices $v_1,v_2,\ldots, v_r$ in $\partial B$ are each contained in  distinct $H$-labelled faces then we obtain the contradiction, 
 \[|\partial B| - 3 =\sum_{H\in\H}(|\partial H|-3) \geq (|\partial H_{e}|-3)  + \sum_{i=1}^r (|\partial H_{v_i}|-3)\geq r+1.\]

Case 3: Suppose $G$ contains exactly two $BH$ edges $e$ and $f$ and that these edges are not adjacent. The vertices of $e$ are contained in a common $H$-labelled face $H_e$ and  the vertices of $f$ are contained in a common $H$-labelled face $H_f$. 
If $H_e$ and $H_f$ are distinct, and, if the remaining $r=|\partial B|-4$ vertices $v_1,v_2,\ldots, v_r$ in $\partial B$ are each contained in distinct $H$-labelled faces then we obtain the contradiction, 
\[|\partial B| - 3 =\sum_{H\in\H}(|\partial H|-3) \geq (|\partial H_e|-3) + (|\partial H_f|-3) + \sum_{i=1}^r (|\partial H_{v_i}|-3)\geq r+2.\]

The contradictions obtained in each of the above cases imply that there must exist a pair of vertices $v$ and $w$ in $\partial B$ which are not joined by a $BH$-edge and for which $H_{v}=H_{w}$. By Lemma \ref{l:commonH}, there must exist a non-facial critical separating cycle in $G$.  However, this contradicts the indivisibility of $G$ and so $G$ must contain at least three $BH$ edges.  

\end{proof}

\begin{lem}
    \label{lem:ExactlyThreeBH}
    Suppose that \( G \in \mathcal G(1,n) \) has no \( TT \)
    edges, is indivisible and has exactly three \( BH \) edges.
    Then 
    \begin{enumerate}[(i)]
        \item Every $H$-labelled face in \( G \) is a quadrilateral.
        \item The three \( BH \) edges are not consecutive edges in $\partial B$.
    \end{enumerate}
\end{lem}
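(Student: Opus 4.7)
The plan is to exploit the freedom identity
\[|\partial B|-3=\sum_{H\in\H}(|\partial H|-3),\]
(which follows from $f(G^\dagger)=6$, as in the proof of Lemma~\ref{lem:LessThanTwoBHandIndivisibleImpliesContractibleTT}) together with a case analysis on how the three $BH$ edges are arranged around the cycle $\partial B$.

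I would first record three standing observations. (a) Every vertex of $\partial B$ lies on some $H$-face, for otherwise the faces around such a vertex would all be $B$ or triangular, producing a $TT$ edge. (b) If two distinct vertices of $\partial B$ share an $H$-face then they are joined by a $BH$ edge: this is the contrapositive of Lemma~\ref{l:commonH} under indivisibility. (c) Each $H$-face contains at most two vertices of $\partial B$: three such vertices would, by (b), need to be pairwise joined by $BH$ edges, i.e.\ by three edges of the cycle $\partial B$, and a cycle of length at least four contains no triangle subgraph.

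For (ii), I would assume for contradiction that $e_1=v_0v_1$, $e_2=v_1v_2$, $e_3=v_2v_3$ are three consecutive edges of $\partial B$, and write $H_{e_i}$ for the unique $H$-face on the non-$B$ side of $e_i$. Using only the inventory of three $BH$ edges, the argument behind (c) forces $V(\partial H_{e_i})\cap V(\partial B)=\{v_{i-1},v_i\}$ and the three $H_{e_i}$ to be pairwise distinct. The remaining $|\partial B|-4$ boundary vertices each lie in some $H$-face by (a), and by (b) these must be pairwise distinct and also disjoint from $\{H_{e_1},H_{e_2},H_{e_3}\}$. Hence $|\H|\geq|\partial B|-1$, so the freedom identity yields $|\partial B|-3\geq|\H|\geq|\partial B|-1$, a contradiction.

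For (i), the same bookkeeping excludes the configuration in which exactly two of $e_1,e_2,e_3$ are consecutive (that case gives $|\H|\geq|\partial B|-2$, again contradicting the identity), so the three $BH$ edges must be pairwise non-adjacent in $\partial B$. In this configuration the $H_{e_i}$ are distinct, each meets $\partial B$ in exactly two vertices, and the remaining $|\partial B|-6$ boundary vertices lie in further pairwise distinct $H$-faces, giving $|\H|\geq|\partial B|-3$. Combined with $\sum_{H\in\H}(|\partial H|-3)=|\partial B|-3$ and $|\partial H|-3\geq 1$, equality must hold throughout, and so $|\partial H|=4$ for every $H$-face. The main technical hurdle is observation (c) and its refinement---that the only $\partial B$-vertices of $H_{e_i}$ are the endpoints of $e_i$---since once this is in hand the proof reduces to counting $H$-faces against the freedom identity.
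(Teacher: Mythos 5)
Your proof is correct, and although it runs on the same two engines as the paper's --- the freedom identity $|\partial B|-3=\sum_{H\in\H}(|\partial H|-3)$ and Lemma~\ref{l:commonH} applied under indivisibility --- it organises them differently and ends up proving strictly more. The paper's treatment of the consecutive case counts only the single face $H_e$, reaches \emph{equality} in the identity, concludes that every $H$-face is a quadrilateral, and only then derives the contradiction geometrically: $H_e$ is forced to carry all three consecutive $BH$ edges plus a fourth edge joining two vertices of $B^\dagger$ outside $B^\dagger$, violating the $(3,6)$-sparsity of $G^\dagger$. Your observation (c) --- each $H$-face meets $\partial B$ in at most two vertices, since three such vertices would be pairwise joined by $BH$ edges, all of which lie on the cycle $\partial B$ of length at least four --- short-circuits this: it makes $H_{e_1}, H_{e_2}, H_{e_3}$ pairwise distinct, so the count itself overshoots ($|\H|\ge|\partial B|-1$ against a budget of $|\partial B|-3$) and no appeal to a chord of $B^\dagger$ is needed. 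The same sharpened count eliminates the paper's Case 2 (exactly two of the three $BH$ edges adjacent), which the paper carries through to a consistent-looking ``all quadrilaterals'' conclusion without remarking that it is in fact vacuous: as you show, that configuration gives $|\H|\ge|\partial B|-2$, again contradicting the identity, so the three $BH$ edges must be pairwise non-adjacent --- a stronger conclusion than part (ii) as stated, and one consistent with the indivisible example on the left of Figure~\ref{fig:TerminalIndivisibleThreeBH}, whose three $BH$ edges are indeed pairwise non-adjacent. What the paper's route buys is a proof of (i) that does not depend on first settling which adjacency pattern occurs; what yours buys is a purely arithmetic elimination of both degenerate configurations and the sharper structural statement, at the modest cost of verifying observation (c), which is immediate once one notes that every $BH$ edge is an edge of $\partial B$. (Your justification of observation (a) is at the same level of rigour as the paper's own unproved assertion of it, so no complaint there.)
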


\begin{proof}
    Consider the following three cases.
		
		Case 1: Suppose $G$ contains exactly three $BH$ edges $e,f,g$ and no two are adjacent. 
		Then the vertices of $e,f,g$ are respectively contained in common $H$-labelled faces $H_e$, $H_f$ and $H_g$. 
		Since $G$ is indivisible, the faces $H_e$, $H_f$ and $H_g$ are distinct and the remaining $r=|\partial B|-6$ vertices $v_1,v_2,\ldots, v_r$ in $\partial B$ are each contained in a distinct $H$-labelled face. Thus, 
\begin{eqnarray*}
|\partial B| - 3 &=& \sum_{H\in\H}(|\partial H|-3)  \\
&\geq& (|\partial H_e|-3) +(|\partial H_f|-3)+(|\partial H_g|-3)+ \sum_{i=1}^r (|\partial H_{v_i}|-3) \\
&\geq& r+3
\end{eqnarray*} 
The above inequalities imply that $H_e$, $H_f$, $H_g$ and $H_{v_1},\ldots,H_{v_r}$ are the only $H$-labelled faces of $G$ and each of these faces has boundary length four. 

Case 2: Suppose $G$ contains exactly three $BH$ edges $e,f,g$ and exactly two of these edges, $e$ and $f$ say,  are adjacent. 
The vertices of $e$ and $g$ are respectively contained in common $H$-labelled faces $H_e$ and $H_g$. 
 Since $G$ is indivisible, the faces $H_e$ and $H_g$ are distinct and the remaining 
$r=|\partial B|-5$ vertices $v_1,v_2,\ldots, v_r$ in $\partial B$ are each contained in distinct $H$-labelled faces. Thus, 
\begin{eqnarray*}
|\partial B| - 3 &=&\sum_{H\in\H}(|\partial H|-3)  \\
&\geq& (|\partial H_e|-3) +(|\partial H_g|-3)+\sum_{i=1}^r (|\partial H_{v_i}|-3)\\
&\geq& r+2
\end{eqnarray*} 
The above inequalities imply that $H_e$, $H_g$ and $H_{v_1},\ldots,H_{v_r}$ are the only $H$-labelled faces of $G$ and each of these faces has boundary length four. 

Case 3: Suppose $G$ contains exactly three $BH$ edges $e,f,g$ and these three edges are consecutive. 
The vertices of $e$ are contained in a common $H$-labelled face $H_e$. 
 Since $G$ is indivisible, it follows from Lemma \ref{l:commonH} that the remaining 
$r=|\partial B|-4$ vertices $v_1,v_2,\ldots, v_r$ in $\partial B$ are each contained in distinct $H$-labelled faces. Thus, 
\begin{eqnarray*}
|\partial B| - 3 &=&\sum_{H\in\H}(|\partial H|-3)  \\
&\geq& (|\partial H_e|-3) +\sum_{i=1}^r (|\partial H_{v_i}|-3)\\
&\geq& r+1
\end{eqnarray*} 
The above inequalities imply that $H_e$  and $H_{v_1},\ldots,H_{v_r}$ are the only $H$-labelled faces of $G$ and each of these faces has boundary length four. 		
However,  the boundary of $H_e$ consists of three consecutive edges of $\partial B$ and a fourth edge that is not 
    in $B^\dagger$ but is incident to two vertices of $B^\dagger$. This contradicts the \( (3,6) \)-tightness of \( G^\dagger \) and so the three $BH$-edges of $G$ must not be consecutive. 
\end{proof}

See Figure \ref{fig:TerminalIndivisibleThreeBH}
for examples of  face graphs with no 
\( TT \) edges and exactly three \( BH \) edges. 

\begin{figure}
\definecolor{uququq}{rgb}{0.25,0.25,0.25}
\definecolor{zzttqq}{rgb}{0.6,0.2,0}
\definecolor{qqqqff}{rgb}{0,0,1}

\begin{tikzpicture}[line cap=round,line join=round,x=0.7cm,y=0.7cm]
\begin{scope}[shift={(0,3)},scale=0.5]
\clip(-7.1,-0.94) rectangle (17.52,8.88);
\draw [color=zzttqq] (-1.74,-0.08)-- (2.8,-0.08);
\draw [color=zzttqq] (2.8,-0.08)-- (5.07,3.85);
\draw [color=zzttqq] (5.07,3.85)-- (2.8,7.78);
\draw [color=zzttqq] (2.8,7.78)-- (-1.74,7.78);
\draw [color=zzttqq] (-1.74,7.78)-- (-4.01,3.85);
\draw [color=zzttqq] (-4.01,3.85)-- (-1.74,-0.08);
\draw (-1.74,7.78)-- (-1.66,5.22);
\draw (-1.66,5.22)-- (2.66,5.34);
\draw (2.66,5.34)-- (2.8,7.78);
\draw (2.66,5.34)-- (5.07,3.85);
\draw (2.66,5.34)-- (0.56,1.84);
\draw (0.56,1.84)-- (2.8,-0.08);
\draw (0.56,1.84)-- (-1.74,-0.08);
\draw (0.56,1.84)-- (-1.66,5.22);
\draw (-1.66,5.22)-- (-4.01,3.85);
\draw (-0.3,6.9) node[anchor=north west] {$ H $};
\draw (2.38,3.16) node[anchor=north west] {$ H $};
\draw (-2.22,3.2) node[anchor=north west] {$ H $};
\draw (-4.74,6.84) node[anchor=north west] {$ B $};
\begin{scriptsize}
\fill  (-1.74,-0.08) circle (2.5pt);
\fill  (2.8,-0.08) circle (2.5pt);
\fill [color=uququq] (5.07,3.85) circle (2.5pt);
\fill [color=uququq] (2.8,7.78) circle (2.5pt);
\fill [color=uququq] (-1.74,7.78) circle (2.5pt);
\fill [color=uququq] (-4.01,3.85) circle (2.5pt);
\fill  (-1.66,5.22) circle (2.5pt);
\fill  (2.66,5.34) circle (2.5pt);
\fill (0.56,1.84) circle (2.5pt);
\end{scriptsize}

\end{scope}

\begin{scope}[shift={(5.5,3)},scale=0.5]
\clip(-7.1,-0.94) rectangle (17.52,8.88);

  \coordinate (A1) at (-1.5,7.7);
  \coordinate (A2) at (2.5,7.7);
  \coordinate (A3) at (4.5,5.5);
  \coordinate (A4) at (4.5,2.5);
  \coordinate (A5) at (0.5,-0.08);
  \coordinate (A6) at (-3.5,2.5);
  \coordinate (A7) at (-3.5,5.5);
  \coordinate (A8) at (-1.5,5.5);
  \coordinate (A9) at (2.5,5.5);
  \coordinate (A10) at (2.5,2.5);
  \coordinate (A11) at (-1.5,2.5);
  
\draw [color=zzttqq] (A1) -- (A2) -- (A3) -- (A4) -- (A5) -- (A6) -- (A7) -- (A1);
\draw (A8) -- (A9) -- (A3);
\draw (A1)  -- (A8) -- (A11) -- (A5);
\draw (A2) -- (A9) -- (A10);
\draw  (A6) -- (A11);
\draw (A10) -- (A4);
\draw[color=blue] (A5) -- (A6) -- (A7) -- (A8) -- (A10) -- (A5);
\draw (-0.3,7.1) node[anchor=north west] {$ H $};
\draw (2.8,4.5) node[anchor=north west] {$ H $};
\draw (-3.2,4.5) node[anchor=north west] {$ H $};
\draw (-0.5,3.3) node[anchor=north west] {$ H $};
\draw (-5.1,6.84) node[anchor=north west] {$ B $};
\begin{scriptsize}
\fill [color=uququq] (A1) circle (2.5pt);
\fill [color=uququq] (A2) circle (2.5pt);
\fill [color=uququq] (A3) circle (2.5pt);
\fill [color=uququq] (A4) circle (2.5pt);
\fill [color=uququq] (A5) circle (2.5pt);
\fill [color=uququq] (A6) circle (2.5pt);
\fill [color=uququq] (A7) circle (2.5pt);
\fill [color=uququq] (A8) circle (2.5pt);
\fill [color=uququq] (A9) circle (2.5pt);
\fill [color=uququq] (A10) circle (2.5pt);
\fill [color=uququq] (A11) circle (2.5pt);

\end{scriptsize}

\end{scope}

\begin{scope}[shift={(11,3)},scale=0.5]
\clip(-7.1,-0.94) rectangle (17.52,8.88);

  \coordinate (A1) at (-1.5,7.7);
  \coordinate (A2) at (2.5,7.7);
  \coordinate (A3) at (4.5,5.5);
  \coordinate (A4) at (4.5,2.5);
  \coordinate (A5) at (2.5,0);
  \coordinate (A6) at (-1.5,0);
  \coordinate (A7) at (-3.5,2.5);
  \coordinate (A8) at (-3.5,5.5);
  \coordinate (A9) at (-1.5,5.5);
  \coordinate (A10) at (2.5,5.5);
  \coordinate (A11) at (2.5,2.5);
  \coordinate (A12) at (-1.5,2.5);
  \coordinate (A13) at (0.5,2.5);
  
\draw [color=zzttqq] (A1) -- (A2) -- (A3) -- (A4) -- (A5) -- (A6) -- (A7) -- (A8) -- (A1);
\draw (A9) -- (A10) -- (A3);
\draw (A7) -- (A12) -- (A9) -- (A1);
\draw (A4) -- (A11) -- (A10) -- (A2);
\draw (A12) -- (A6);
\draw (A13) -- (A10);
\draw (A13) -- (A5) -- (A11);
\draw[color=blue] (A6) -- (A7) -- (A8) -- (A9) -- (A13) -- (A6);

\draw (-0.3,7.1) node[anchor=north west] {$ H $};
\draw (2.8,4.5) node[anchor=north west] {$ H $};
\draw (-3.2,4.5) node[anchor=north west] {$ H $};
\draw (0.9,3.4) node[anchor=north west] {$ H $};
\draw (-1.5,3.4) node[anchor=north west] {$ H $};
\draw (-5.1,6.84) node[anchor=north west] {$ B $};
\begin{scriptsize}
\fill [color=uququq] (A1) circle (2.5pt);
\fill [color=uququq] (A2) circle (2.5pt);
\fill [color=uququq] (A3) circle (2.5pt);
\fill [color=uququq] (A4) circle (2.5pt);
\fill [color=uququq] (A5) circle (2.5pt);
\fill [color=uququq] (A6) circle (2.5pt);
\fill [color=uququq] (A7) circle (2.5pt);
\fill [color=uququq] (A8) circle (2.5pt);
\fill [color=uququq] (A9) circle (2.5pt);
\fill [color=uququq] (A10) circle (2.5pt);
\fill [color=uququq] (A11) circle (2.5pt);
\fill [color=uququq] (A12) circle (2.5pt);
\fill [color=uququq] (A13) circle (2.5pt);
\end{scriptsize}

\end{scope}

\end{tikzpicture}
\begin{caption}{Face graphs with no \( TT \) edges and exactly three \( BH \) edges. The face graph on the left lies in $\mathcal G(1,3)$ and is indivisible. The face graphs in the middle and on the right lie in $\mathcal G(1,4)$ and $\mathcal G(1,5)$ respectively and contain non-facial critical separating cycles (indicated in blue). }
    \label{fig:TerminalIndivisibleThreeBH}
\end{caption}
\end{figure}

\subsection{On the sufficiency of vertex splitting}
Let $G\in \mathcal{G}(1,n)$.
 A $TT$ edge is {\em contractible} in $G$ if it does not belong to any non-facial $3$-cycle in $G$.
A {\em $TT$ edge contraction} on $G$ is an operation on the class of face graphs  whereby the vertices of a contractible $TT$ edge in $G$ are identified, the resulting loop and parallel edges are discarded, and the labellings of all non-triangular faces in the resulting planar graph are inherited from $G$.
Note that a $TT$ edge contraction fails to preserve $(3,6)$-tightness if and only if the contractible $TT$ edge lies on a non-facial critical separating cycle of $G$ (see \cite[Lemma 27]{ckp}). For this reason we restrict attention to $TT$ edge contractions on $G$ which are {\em admissible} in the sense that the contractible $TT$ edge does not belong to a non-facial critical separating cycle of $G$. 

\begin{definition}
A face graph $G\in \mathcal G(1,n)$ is {\em terminal} if there exist no admissible $TT$ edge contractions on $G$.
\end{definition}

\begin{lem}
\label{l:terminal}
Let $G\in \mathcal{G}(1,n)$. 
If $G$ is terminal then $G$ contains no non-facial $3$-cycles.
\end{lem}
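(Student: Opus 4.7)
The plan is to prove the contrapositive: if $G\in \mathcal{G}(1,n)$ contains a non-facial $3$-cycle, then $G$ admits an admissible $TT$ edge contraction and so is not terminal. I would begin with the following structural observation. Any non-facial $3$-cycle $c$ of $G$ is automatically a critical separating cycle: decomposing $G^\dagger = Ext(c)\cup int(c)$, the intersection is the triangle $c$ with freedom number $6$, so $f(Ext(c))+f(int(c))=f(G^\dagger)+6=12$, and $(3,6)$-sparsity forces $f(Ext(c))=f(int(c))=6$. In particular $Ext(c)$ is $(3,6)$-tight.

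Next, among all non-facial $3$-cycles of $G$, choose $c=xyz$ with the smallest interior (say, fewest vertices strictly inside). By this choice, every $3$-cycle strictly inside $c$ is facial. Since $c$ is non-facial, the interior contains at least one vertex. Assume initially that no $H$-face of $G$ lies inside $c$, so the closed disc bounded by $c$ is triangulated. Any interior edge $e$ (not on $c$) has both adjacent faces triangular, so $e$ is a $TT$ edge of $G$; and since $e$ has an endpoint strictly interior to $c$, planarity forces every $3$-cycle of $G$ through $e$ to lie in the closed disc of $c$. By the innermost choice of $c$, this $3$-cycle is either equal to $c$ (impossible, as an endpoint of $e$ is strictly interior to $c$) or is facial. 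Hence $e$ is contractible.

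The principal obstacle, and where most of the work lies, is to refine the choice of $e$ so that it avoids every non-facial critical separating cycle of $G$. Suppose for contradiction that every interior $TT$ edge $e$ lies on some non-facial critical separating cycle $d$. Applying Lemma~\ref{lem:TightSubgraphTwo} to the $(3,6)$-tight subgraph $Ext(c)\cap Ext(d)$, which contains $B^\dagger$, one obtains a face graph whose $H$-face boundaries are either boundaries of $H$-faces of $G$ or non-facial critical separating cycles of $G$. A tight-overlap computation modelled on Lemma~\ref{lem:TightSubgraphThree}, combined with Lemma~\ref{l:csc} to promote cycles from internal face graphs up to $G$, then exhibits a non-facial critical separating cycle strictly inside $c$; iterating until the produced cycle has length three contradicts the innermost choice of $c$. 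The remaining case, in which an $H$-face of $G$ lies inside $c$, is reduced to the above by first using Lemma~\ref{l:commonH} applied to a suitable pair of vertices of the $H$-boundary to peel off a smaller critical region. The resulting contractible $TT$ edge is admissible, contradicting terminality.
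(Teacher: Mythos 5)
Your opening computation (every non-facial $3$-cycle is critical separating) and your innermost-cycle argument that interior edges of the disc are contractible $TT$ edges are both sound, and they run parallel to the paper's setup. But the step you yourself identify as ``where most of the work lies'' --- refining the choice of $e$ so that it avoids every non-facial critical separating cycle --- is not actually carried out, and the sketch you give for it does not work. You assume every interior $TT$ edge lies on some non-facial critical separating cycle $d$ and assert that a ``tight-overlap computation modelled on Lemma \ref{lem:TightSubgraphThree}'', plus Lemma \ref{l:csc}, ``exhibits a non-facial critical separating cycle strictly inside $c$'', iterating down to length three. No such computation is given, and its target is problematic: once the closed disc bounded by $c$ is known to be triangulated, a count shows that for any cycle $d'$ of length $b\geq 4$ lying in that disc one has $f(Ext(d'))=3+b>6$, so no critical separating cycle of length four or more lies inside the disc at all; the cycles produced by intersecting $Ext(c)$ with $Ext(d)$ as in Lemma \ref{lem:TightSubgraphThree} run through the boundary region, not strictly inside $c$, so there is nothing for your iteration to act on. The paper avoids this difficulty entirely and this is the idea your proposal is missing: since $f(G_2)=6$ and planar graphs satisfy $f\geq 6$ with equality exactly for maximal planar graphs, the internal graph $G_2$ is a triangulation; Barnette's lemma \cite{barnette} supplies a contractible edge $f$ of $G_2$ not on $c$; replacing $G_2$ by the maximal planar graph $G_2/f$ inside $G^\dagger$ and applying the isostatic substitution principle (Lemma \ref{l:isp}) shows $(G/f)^\dagger$ is minimally $3$-rigid, hence $(3,6)$-tight; and since by \cite[Lemma 27]{ckp} a $TT$ contraction fails to preserve $(3,6)$-tightness exactly when the edge lies on a non-facial critical separating cycle, admissibility follows with no innermost choice and no cycle surgery. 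Without this (or a genuinely worked-out substitute), your central step is unproven.

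A secondary error: your deferred case, in which an $H$-face of $G$ lies inside $c$, is vacuous, and your proposed treatment of it is wrong in any event. Your own count already gives $f(int(c))=6$ (with $c$ included), and by the planarity bound just quoted this forces the interior of $c$ to be fully triangulated, so no $H$-face can sit inside a non-facial $3$-cycle of a graph in $\mathcal{G}(1,n)$. Moreover Lemma \ref{l:commonH} could not be invoked as you suggest: it applies to two vertices of $\partial B$ lying in a common $H$-face, whereas the boundary of a hypothetical $H$-face inside $c$ would contain a vertex strictly interior to $c$ (it has at least four boundary vertices and $|c|=3$), and such a vertex is not on $\partial B$. So the case split should be deleted rather than repaired; what remains is the admissibility gap described above.
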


\proof
 Suppose $c$ is a non-facial $3$-cycle in $G$. Note that $f(c)=6$. Since $G^\dagger = Ext(c) \cup G_2$ and $c=Ext(c)\cap G_2$ we have,
    \[f(G_2) =f(Ext(c))+f(G_2)-f(c) =f(G^\dagger) =6.\]
    Recall that in general planar graphs satisfy $f(K)\geq 6$ and so $G_2$ is a maximal planar graph. Since $c$ is a non-facial $3$-cycle in $G$ it follows that there exists a contractible $TT$ edge $f$ in $G_2$ that does not lie in $c$ (see for example \cite[Lemma 1]{barnette}). 
    Note that the graph $G_2/f$ obtained on contracting this $TT$ edge is again a maximal planar graph. 
     Consider the face graph $G/f$ obtained from $G$ by applying a $TT$ edge contraction to $f$.  
     Note that the discus-and-hole graph $(G/f)^\dagger$ is obtained from $G^\dagger$ by replacing $G_2$ with $G_2/f$. Also note that, $G^\dagger$, $G_2$ and $G_2/f$ are minimally $3$-rigid.
     Thus, by the isostatic substitution principle (Lemma \ref{l:isp}), $(G/f)^\dagger$ is minimally $3$-rigid. 
     In particular, $(G/f)^\dagger$ is $(3,6)$-tight.
    Since the $TT$ edge contraction of $f$ preserves $(3,6)$-tightness it is an admissible $TT$ edge contraction on $G$. This contradicts the terminality of $G$. 
\endproof

A $BH$ edge in the face graph $G$ is {\em contractible} if it does not belong to any $3$-cycle in $G$.
A {\em $BH$ edge contraction} on $G$ is an operation on the class of face graphs whereby the vertices of a contractible $BH$ edge in $G$ are identified, the resulting loop is discarded, and the labellings of all non-triangular faces are inherited from $G$.   
Note that $BH$ edge contractions preserve $(3,6)$-tightness (see \cite[Lemma 29]{ckp}).
Also note that under a $BH$ edge contraction it is possible for the $B$-labelled face and the $H$-labelled face containing the contractible $BH$ edge to be transformed into triangular faces.

\begin{definition}
A face graph  is {\em $BH$-reduced} if it contains no contractible $BH$ edges. 
\end{definition}

We will require the following result.

\begin{lem}{\cite[Corollary 33]{ckp}}
\label{l:cor33}
For each $n\geq 1$, there is no face graph in $\G(1,n)$ which is terminal, indivisible and $BH$-reduced.
\end{lem}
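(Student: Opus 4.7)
The plan is a proof by contradiction: I assume $G \in \G(1,n)$ is simultaneously terminal, indivisible and $BH$-reduced, and derive the existence of a contractible $BH$ edge in $G$, which will contradict the last hypothesis.

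The first step is to use terminality and indivisibility to eliminate two kinds of local structure from $G$. By Lemma \ref{l:terminal}, the terminality hypothesis alone already forces $G$ to contain no non-facial $3$-cycles. I would then observe that indivisibility forces $G$ to have no $TT$ edges at all: if $e$ were a $TT$ edge, then the absence of non-facial $3$-cycles makes $e$ contractible, and indivisibility ensures that $e$ lies on no non-facial critical separating cycle, so the contraction at $e$ would be admissible, contradicting terminality.

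With no $TT$ edges in hand, Lemma \ref{lem:LessThanTwoBHandIndivisibleImpliesContractibleTT} applies and guarantees that $G$ contains at least three (hence at least one) $BH$ edge. The second main step is to show that every such $BH$ edge is contractible. By definition of a $BH$ edge, $e$ lies on the boundary of precisely two faces of $G$: the unique $B$-labelled face and some $H$-labelled face. In the face graph construction, labels $B$ and $H$ only adorn non-triangular faces, so neither face adjacent to $e$ is a triangle. Consequently $e$ is not an edge of any triangular face, and thus does not lie on any facial $3$-cycle. Combined with the conclusion of the first step, $e$ lies on no $3$-cycle of $G$ whatsoever, and is therefore contractible in the sense defined just before the lemma. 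This contradicts the assumption that $G$ is $BH$-reduced, completing the argument.

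The only mild obstacle I anticipate is making the implication \emph{terminal $+$ indivisible $\Rightarrow$ no $TT$ edges} completely explicit by carefully unpacking the definition of an admissible $TT$ edge contraction; once this reduction is established, the remainder is an immediate consequence of the planar structure of a face graph together with the two quoted lemmas, and no case analysis on the number of $BH$ edges (unlike in Lemmas \ref{lem:LessThanTwoBHandIndivisibleImpliesContractibleTT} and \ref{lem:ExactlyThreeBH}) is required.
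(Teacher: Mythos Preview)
The paper does not give its own proof of this lemma; it simply quotes it as \cite[Corollary~33]{ckp}. Your argument, by contrast, is a correct self-contained derivation from the lemmas already established in the present paper, and there is no circularity (neither Lemma~\ref{l:terminal} nor Lemma~\ref{lem:LessThanTwoBHandIndivisibleImpliesContractibleTT} relies on Lemma~\ref{l:cor33}).

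The reduction \emph{terminal $+$ indivisible $\Rightarrow$ no $TT$ edges} is indeed routine once unpacked: Lemma~\ref{l:terminal} eliminates non-facial $3$-cycles, so any $TT$ edge is contractible by definition, and indivisibility means there are no non-facial critical separating cycles at all, so its contraction is admissible---contradicting terminality. Your final observation, that a $BH$ edge borders only the $B$-face and an $H$-face (both non-triangular by the definition of a face graph) and hence lies on no $3$-cycle once non-facial $3$-cycles are excluded, is exactly right and immediately contradicts $BH$-reducedness.

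A minor remark: you only need the existence of a single $BH$ edge, and this is already provided by \cite[Proposition~22(ii)]{ckp} (invoked in the first line of the proof of Lemma~\ref{lem:LessThanTwoBHandIndivisibleImpliesContractibleTT}); appealing to the full conclusion of that lemma is harmless overkill.
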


Note that the reversal of a $TT$ edge contraction or a $BH$ edge contraction is a vertex splitting operation.
We can now strengthen the statement of Theorem \ref{t:ckpthm} as follows.

\begin{thm}
    \label{thm:MainImproved}
    Let \( \hat G \) be a block-and-hole graph with a single block and finitely many holes, or, a single hole and finitely many blocks. 
    The following statements are equivalent.
    \begin{enumerate}[(i)]
        \item \( \hat G \) is minimally \( 3 \)-rigid.
        \item \( G^\dagger \) is constructible from 
            \( K_3 \) by vertex splitting.
    \end{enumerate}
 \end{thm}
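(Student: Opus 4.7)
For the implication (ii) $\Rightarrow$ (i), vertex splitting preserves minimal $3$-rigidity by Whiteley (\cite{whi-vertex}) and $K_3$ is minimally $3$-rigid, so $G^\dagger$ is minimally $3$-rigid and therefore $(3,6)$-tight by Theorem \ref{t:ckpthm}. Swapping each simplicial discus $B_i^\dagger$ in $G^\dagger$ for the corresponding minimally $3$-rigid block $\hat B_i$ preserves both the freedom count and $(3,6)$-sparsity on all subgraphs, so $\hat G$ is $(3,6)$-tight and hence minimally $3$-rigid by Theorem \ref{t:ckpthm} applied once more.

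For (i) $\Rightarrow$ (ii), the block-hole duality of \cite{frw} reduces the task to the single-block case, and Theorem \ref{t:ckpthm} transforms the hypothesis into $G \in \mathcal{G}(1, n)$. The plan is to induct on $|V(G)|$, with base case the tetrahedral face graph, whose $G^\dagger$ admits an explicit construction from $K_3$ by three vertex splittings. For the inductive step I consider a trichotomy on $G$. If $G$ has a contractible $BH$ edge, contract it: by \cite[Lemma 29]{ckp} the contracted face graph $G'$ lies in $\mathcal{G}(1,n)$ and is strictly smaller, induction applies to $(G')^\dagger$, and the reverse of the contraction in $(G')^\dagger$ is a vertex split pivoted at the two poles of $B^\dagger$ (which are common neighbours of the identified vertex). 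If $G$ is $BH$-reduced but admits an admissible $TT$ edge contraction, apply the analogous argument with pivot pair given by the apex vertices of the two triangles bordering the $TT$ edge. Otherwise $G$ is both $BH$-reduced and terminal, so by Lemma \ref{l:cor33} $G$ is not indivisible and hence admits a non-facial critical separating cycle $c$. The external and internal face graphs $G_1, G_2$ lie in $\mathcal{G}(1, \cdot)$ and are strictly smaller than $G$, so by induction both $G_1^\dagger = Ext(c)$ and $G_2^\dagger$ are constructible from $K_3$ by vertex splitting.

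The hard part is combining these two constructions into one for $G^\dagger$. My plan is to begin with the inductive construction of $G_1^\dagger$, after which $V(c) \subseteq V(G_1^\dagger)$ is already present, and then to append a tail of vertex splittings extracted from the construction of $G_2^\dagger$ that introduces the interior vertices of $c$ (that is, the vertices in $V(G_2) \setminus V(c)$). The key observation is that in $G_2^\dagger$ no such interior vertex is adjacent to the poles $x_c, y_c$, so each vertex splitting that introduces an interior vertex has pivot pair and all distributed neighbours lying in $V(G_2) \setminus \{x_c, y_c\}$. A standard rearrangement of the splitting sequence for $G_2^\dagger$ (building $D_c$ early so that the interior-building splittings form the tail) isolates these splittings, and they transfer verbatim to act on the state containing $G_1^\dagger$, with the block-poles $x_B, y_B$ simply carried along on the non-interior side of each distribution. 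Lemma \ref{lem:TightSubgraphThree} ensures that admissibility conditions on any $TT$ edges involved propagate correctly between $G$ and $G_1, G_2$, so each intermediate state in the combined sequence is $(3,6)$-tight and each step is a valid vertex splitting. Concatenating the inductive construction of $G_1^\dagger$ with this tail produces the desired construction of $G^\dagger$ from $K_3$.
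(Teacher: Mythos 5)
Your implication (ii) $\Rightarrow$ (i) and the first two branches of your trichotomy (contract a $BH$ edge, or an admissible $TT$ edge, apply induction, and reverse by a vertex split) coincide with the paper's argument. The divergence, and the genuine gap, is in the terminal, $BH$-reduced case. There you take a non-facial critical separating cycle $c$, apply induction to both $G_1^\dagger=Ext(c)$ and $G_2^\dagger$, and propose to splice: rearrange the splitting sequence for $G_2^\dagger$ so that the discus on $c$ is ``built early'' and the splits introducing the vertices interior to $c$ form a tail that can be appended to the construction of $G_1^\dagger$. No such ``standard rearrangement'' exists, and this is precisely the obstruction that forced the isostatic substitution move in \cite{ckp} which Theorem \ref{thm:MainImproved} is designed to eliminate. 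Concretely: your tail, read backwards, would be a sequence of admissible contractions reducing $G_2^\dagger$ to the bare discus on $c$ while never touching $c$ itself; but in exactly the case at hand the paper shows (via Lemma \ref{lem:TightSubgraphThree} and terminality of $G$) that one may assume $G_2$ has \emph{no} $TT$ edges at all. Then every available contraction of $G_2$ is a $BH$ contraction, and every $BH$ edge of $G_2$ lies on $\partial B(G_2)=c$, so the \emph{first} reverse move in any construction of $G_2^\dagger$ necessarily contracts an edge of $c$. Interior vertices can genuinely exist in this situation (see the face graphs of Figure \ref{fig:TerminalIndivisibleThreeBH}), so there is simply no splitting sequence for $G_2^\dagger$ whose final segment consists of interior-only splits performed after a fully formed discus on $c$; your key observation about poles being non-adjacent to interior vertices does not address this order-dependence.

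The paper's route through this case is structurally different: it never combines two constructions. It chooses $c$ innermost (using Lemma \ref{l:csc}) so that $G_2$ is indivisible, shows $G_2$ has no $TT$ edges and $G_1$ is terminal, and then derives a \emph{contradiction} via counting: Lemmas \ref{lem:LessThanTwoBHandIndivisibleImpliesContractibleTT} and \ref{lem:ExactlyThreeBH} force $G_2$ to have at least three non-consecutive $BH$ edges, whence at least four edges of $c$ avoid $\partial B$ (Claim \ref{cl:FourEdgesSurvive}); contracting all contractible $BH$ edges of $G_1$ (all of which lie on $c$) yields a smaller face graph $K$ in which the image of $c$ still bounds a hole, and applying the induction hypothesis to $K^\dagger$ produces an admissible $TT$ contraction that pulls back to $G_1$ and contradicts its terminality. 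So the terminal, $BH$-reduced case is vacuous and no splicing is ever needed. To repair your proof you would have to either prove the rearrangement claim (which, as argued above, is false in the relevant generality) or replace the splicing step with an argument, like the paper's, showing the case cannot occur. Two smaller points: you invoke block-hole swapping from \cite{frw} to reduce to one block, but swapping preserves minimal $3$-rigidity of $\hat G$, not obviously constructibility of the associated discus-and-hole graph; and you use Lemma \ref{l:cor33} without first ruling out non-facial $3$-cycles (Lemma \ref{l:terminal}), which is what guarantees $|c|\geq 4$ so that edges of $c$ are not $TT$ edges of $G$.
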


\begin{proof}
    Throughout this proof we will use the word ``constructible'' as a 
    shorthand for ``constructible from \( K_3 \) by vertex splitting only''.
    In light of  Theorem \ref{t:ckpthm} it suffices to show that if the discus-and-hole graph \( G^\dagger \) with a single discus and finitely many holes 
    is \( (3,6) \)-tight then it is constructible.
    We prove this by induction on the 
    number of edges in \( G^\dagger \). Thus let \( G \in \mathcal G(1,n) \)
    and assume that the theorem is true for all discus-and-hole graphs with strictly fewer edges than $G^\dagger$.
    If \( G \) has a contractible \( BH \) edge then by \cite[Lemma 29]{ckp} we can apply a $BH$ edge contraction to obtain a face graph $G'$ that lies in \( \mathcal G(1,n) \), \( \mathcal G(1,n-1)  \) or in \( \mathcal G(0,0) \). In any case, the resulting discus-and-hole graph $(G')^\dagger$ has fewer edges than $G^\dagger$ and is hence constructible. Note that $G^\dagger$ can be obtained from $(G')^\dagger$ by applying a vertex splitting operation and so \( G^\dagger \) is also constructible.
    Similarly, if \( G \) has a contractible \( TT \) edge that does not lie in any
    non-facial critical separating cycle then we may apply an admissible $TT$ edge contraction to obtain a face graph $G'$ which lies in \( \mathcal G(1,n) \).
    Again, the resulting discus-and-hole graph $(G')^\dagger$ has fewer edges than $G^\dagger$ and is hence constructible. Since  $G^\dagger$  can be obtained from $(G')^\dagger$ by vertex splitting we conclude that \( G^\dagger \) is  constructible also. 

    Now suppose \( G \) is both \( BH \)-reduced and terminal. 
    By Lemma \ref{l:terminal}, $G$ contains no non-facial $3$-cycles. Thus, by \ref{l:cor33}, 
    $G$ must contain a non-facial critical separating cycle \( c \) with $|c|\geq4$.    Let $G_1$ and $G_2$ be the external and internal face graphs associated with $c$.
    We can choose \( c \) so that there is no non-facial critical separating cycle 
    for \( G \) in \( G_2 \) apart from 
    \( c \) itself. 
    By Lemma \ref{l:csc}, any critical separating cycle for the internal face graph $G_2$ is also a critical separating cycle for $G$.  
    Thus, our choice of \( c \) ensures that the face graph \( G_2 \) is indivisible.

    If 
    \( G_2 \) contains a \( TT \) edge \( e \), then \( e \) does not lie on any 
    non-facial critical separating cycle of \( G_2 \). Since $|c| \geq 4$, $e\not\in c$ and so $e$ is also a $TT$ edge in $G$.
    By Lemma \ref{lem:TightSubgraphThree}, we conclude
    that \( e \) does not lie on any 
    non-facial critical separating cycle 
    for \( G \) either. Thus the contraction of $e$ is an admissible $TT$ edge contraction for $G$. This contradicts the terminality of \( G \) and so, from now on, we may assume that \( G_2 \) has no \( TT \) edges.     
    
    Suppose \( G_1 \) has a contractible \( TT \) edge \( e \)
    that does not lie  on any non-facial critical separating cycle of $G_1$. Since $|c| \geq 4$, $e$ is also a $TT$ edge in $G$. By Lemma \ref{lem:TightSubgraphThree}, $e$ does not lie on any non-facial critical separating cycle 
    of \( G \). Again, the contraction of $e$ is an admissible $TT$ edge contraction for $G$ and this contradicts the assumption that \( G \) is terminal. Thus, we may assume that $G_1$ is terminal.

     Since \( Ext(c) \) has fewer edges than $G^\dagger$, it is constructible. 
     Thus \( G_1 \) must have at least one contractible \( BH \) edge. 
    Since \( G \) is \( BH \)-reduced  and contains no non-facial $3$-cycles, we conclude that $G$ contains no $BH$ edges. Thus every  
    contractible \( BH \) edge of \( G_1 \) must in fact also be an edge of 
    \( c \) (otherwise it would be a \( BH \) edge in \( G \)).

    \begin{claim}
        \label{cl:FourEdgesSurvive}
        There are at least four edges of \( c \) that are not in 
        the boundary of the $B$-labelled face in $G$.
    \end{claim}

    \begin{proof}[Proof of Claim.]
    Using the isostatic substitution principle (Lemma \ref{l:isp}), observe that $G_2^\dagger$ is $(3,6)$-tight since it is obtained from $G^\dagger$ by replacing $Ext(c)$ with a discus.
     Since \( G_2 \) is indivisible 
    and has no \( TT \) edges we can apply Lemma \ref{lem:LessThanTwoBHandIndivisibleImpliesContractibleTT}
    to conclude that \( G_2 \) has at least three \( BH \) edges. 
    None of these edges are contained in the boundary of the $B$-labelled face in $G$ 
    since $G$ contains no $BH$ edges. Thus, we have demonstrated the 
    existence of three of the required four edges. To get the
    fourth edge we use 
    Lemma \ref{lem:ExactlyThreeBH}.
    This says that in the case where \( G_2 \) has exactly three
    \( BH \) edges, these three edges are not consecutive around the
    boundary of the $B$-labelled face of \( G_2 \). Label these three edges
    \( e_1 \), \( e_2 \) and \( e_3 \).
    Now suppose that all other edges of \( c \) also belong to
    the boundary of the the $B$-labelled face in $G$.
    Since 
    \( e_1 \), \( e_2 \) and \( e_3 \) are not consecutive 
    in the cycle \( c \), at least one of these edges, say $e_1$ after relabelling if necessary, is not adjacent to either of the other two. Then the vertices of $e_1$ must lie in the boundary of the $B$-labelled face in $G$. It follows that $e_1$ is an edge of $G^\dagger$
    that is not in the discus \( B^\dagger \) but is incident 
    with two vertices in \( B^\dagger \). This 
    contradicts the \( (3,6) \)-tightness of \( G^\dagger \).
    \end{proof}

    Now let \( K \) be the face 
    graph obtained by applying $BH$ edge contractions to $G_1$ until no further $BH$ edge contractions are possible (recalling that all of these $BH$ edges lie in \( c \)). By Claim \ref{cl:FourEdgesSurvive} 
    there are at least four edges remaining in the cycle 
    corresponding to  \( c \). So this cycle still 
    bounds a hole in \( K \). Thus every \( TT \) edge of \( K \) is 
    also a \( TT \) edge of \( G_1 \). Moreover it is clear that 
    there is an obvious correspondence between the 
    non-facial critical separating cycles of \( K \) and those of 
    \( G_1 \), and,  that if a \( TT \) edge of \( K \) lies on 
    a non-facial critical separating cycle in \( K \) then it 
    does so in \( G_1 \). By induction \( K^\dagger \) is constructible
    and so $K$ must have a contractible \( TT \)  edge that does not lie on a 
    non-facial critical separating cycle (it has no contractible \( BH \) edges
    by construction). But this contradicts the assumption 
    that \( G_1 \) has no such edges.

    We conclude that $G$ cannot be both $BH$-reduced and terminal. This completes the proof.
\end{proof}

\section{$(3,0)$-sparsity and pebble games}
\label{s:(3,0)}
The main result of \cite{ckp} characterises minimal $3$-rigidity for block-and-hole graphs with a single block in terms of $(3,6)$-sparsity.
The aim of this section is to show that $(3,6)$-sparsity is equivalent to an a priori weaker sparsity condition on two related multigraphs.
The advantage of these characterisations is that they can be quickly
checked via a pebble game algorithm in the sense of \cite{leestreinu}, whereas the $(3,6)$-sparsity
condition lies outside the ``matroidal'' range and cannot be so
easily checked.  

Let $G$ be a face graph with a single $B$-labelled face. 
We denote by $G^{2\sigma}$ the multigraph constructed from the face graph $G$ by adjoining two self-loops to each vertex $v\in V(\partial B)$. 
 Let $G^- = G\setminus E(\partial B)$ be the graph obtained by
removing the edges in the boundary cycle $\partial B$ from $G$. We denote by $(G^-)^{3\sigma}$  the graph obtained from $G^-$ by adding three self-loops to each of the vertices of $\partial B$. We refer to $G^{2\sigma}$ and $(G^-)^{3\sigma}$ as looped face graphs.

A multigraph $J$ is said to be  {\em $(3,0)$-sparse} if $f(J')\geq 0$ for any subgraph $J'$. A multigraph $J$ is {\em $(3,0)$-tight} if it is $(3,0)$-sparse and $f(J)=0$.  For more on $(k,l)$-sparsity generally see \cite{leestreinu}. We will require the following lemma.

\begin{lem}
\label{l:3orientation}
A multigraph is $(3,0)$-tight if and only if there exists an outdegree 3 orientation of the edges of the multigraph.    
\end{lem}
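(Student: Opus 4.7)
The plan is to prove this via Hall's marriage theorem applied to a natural bipartite auxiliary graph, which is the standard approach to Hakimi-type orientation results.

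For the easy direction, suppose $J$ admits an orientation in which every vertex has outdegree exactly $3$. Then each edge (including each self-loop) contributes $1$ to the outdegree of some vertex (its tail), so summing gives $|E(J)| = 3|V(J)|$, i.e.\ $f(J) = 0$. Moreover, for any subgraph $J' = (V', E')$, the tail of each edge of $E'$ lies in $V'$, so
\[
|E(J')| \;\leq\; \sum_{v \in V'} \mathrm{outdeg}_J(v) \;=\; 3|V'|,
\]
giving $f(J') \geq 0$. Hence $J$ is $(3,0)$-tight.

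For the converse, suppose $J$ is $(3,0)$-tight. I would build a bipartite graph $\mathcal{H}$ with parts $E(J)$ on one side and $V(J) \times \{1,2,3\}$ (three ``slots'' per vertex) on the other, connecting an edge $e$ to each slot at either of its endpoints (a self-loop at $v$ being connected to the three slots at $v$). The goal is to produce a perfect matching of $E(J)$ into slots: such a matching orients each edge by sending it \emph{away} from the vertex whose slot it occupies, and fills each slot exactly once (since $|E(J)| = 3|V(J)|$), yielding the desired outdegree-$3$ orientation. To verify Hall's condition, take any $S \subseteq E(J)$ and let $V(S) \subseteq V(J)$ be the set of vertices incident with some edge of $S$. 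Then the neighbourhood of $S$ in $\mathcal{H}$ has exactly $3|V(S)|$ slots, while the subgraph $J' = (V(S), S)$ satisfies $|S| \leq 3|V(S)|$ by $(3,0)$-sparsity. Thus Hall's condition holds and the matching exists.

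The main subtlety, and the only place where care is really needed, is the treatment of self-loops: a self-loop at $v$ must be able to consume a slot at $v$, and a collection of $k$ self-loops at a single vertex $v$ forms a subgraph with $|V|=1$ and $|E|=k$, so $(3,0)$-sparsity forces $k \leq 3$, which is precisely what Hall's condition requires at the singleton $V(S) = \{v\}$. Everything else is routine bookkeeping, and no appeal to deeper results beyond Hall's theorem is needed.
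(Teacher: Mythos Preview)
Your argument is correct. Both directions are handled cleanly: the forward direction is immediate from counting tails, and for the converse the Hall/Hakimi bipartite matching argument you give is the classical one and goes through without difficulty for multigraphs with loops, exactly because $(3,0)$-sparsity bounds the number of loops at any single vertex by three.

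Your route differs from the paper's, however. The paper does not argue directly at all: it simply invokes \cite[Theorem~8 and Lemma~10]{leestreinu}, where Lee and Streinu establish the equivalence of $(k,\ell)$-sparsity with the pebble game and with the existence of suitable orientations in the full matroidal range. Your proof is more elementary and entirely self-contained, needing only Hall's marriage theorem rather than the pebble-game machinery; the trade-off is that the cited results also supply the polynomial-time algorithmic content (the pebble game itself) that motivates Section~\ref{s:(3,0)}, whereas your matching argument, as written, certifies existence but does not immediately hand you the algorithm.
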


\begin{proof}
Apply \cite[Theorem 8 and Lemma 10]{leestreinu}.
\end{proof}
    
\begin{figure}
\definecolor{uququq}{rgb}{0.25,0.25,0.25}
\definecolor{zzttqq}{rgb}{0.6,0.2,0}
\definecolor{qqqqff}{rgb}{0,0,1}

\begin{tikzpicture}[line cap=round,line join=round,x=0.7cm,y=0.7cm]
\begin{scope}[shift={(0,3)},scale=0.5]
\clip(-7.1,-0.94) rectangle (17.52,8.88);
\draw [color=zzttqq] (-1.74,-0.08)-- (2.8,-0.08);
\draw [color=zzttqq] (2.8,-0.08)-- (5.07,3.85);
\draw [color=zzttqq] (5.07,3.85)-- (2.8,7.78);
\draw [color=zzttqq] (2.8,7.78)-- (-1.74,7.78);
\draw [color=zzttqq] (-1.74,7.78)-- (-4.01,3.85);
\draw [color=zzttqq] (-4.01,3.85)-- (-1.74,-0.08);
\draw (-1.74,7.78)-- (-1.66,5.22);
\draw (-1.66,5.22)-- (2.66,5.34);
\draw (2.66,5.34)-- (2.8,7.78);
\draw (2.66,5.34)-- (5.07,3.85);
\draw (2.66,5.34)-- (0.56,1.84);
\draw (0.56,1.84)-- (2.8,-0.08);
\draw (0.56,1.84)-- (-1.74,-0.08);
\draw (0.56,1.84)-- (-1.66,5.22);
\draw (-1.66,5.22)-- (-4.01,3.85);
\draw (-0.3,6.9) node[anchor=north west] {$ H $};
\draw (2.38,3.16) node[anchor=north west] {$ H $};
\draw (-2.22,3.2) node[anchor=north west] {$ H $};
\draw (-4.74,6.84) node[anchor=north west] {$ B $};
\begin{scriptsize}
\fill [color=uququq] (-1.74,-0.08) circle (2.5pt);
\fill [color=uququq] (2.8,-0.08) circle (2.5pt);
\fill [color=uququq] (5.07,3.85) circle (2.5pt);
\fill [color=uququq] (2.8,7.78) circle (2.5pt);
\fill [color=uququq] (-1.74,7.78) circle (2.5pt);
\fill [color=uququq] (-4.01,3.85) circle (2.5pt);
\fill [color=uququq] (-1.66,5.22) circle (2.5pt);
\fill [color=uququq] (2.66,5.34) circle (2.5pt);
\fill [color=uququq] (0.56,1.84) circle (2.5pt);
\end{scriptsize}

\end{scope}

\begin{scope}[shift={(5.65,3.7)},scale=0.35]
\clip(-7.1,-2.7) rectangle (17.52,10.5);

\draw[-stealth]  [color=zzttqq] (-1.74,-0.08)-- (2.8,-0.08);
\draw[-stealth] [color=zzttqq] (2.8,-0.08)-- (5.07,3.85);
\draw[-stealth] [color=zzttqq] (5.07,3.85)-- (2.8,7.78);
\draw[-stealth] [color=zzttqq] (2.8,7.78)-- (-1.74,7.78);
\draw[-stealth] [color=zzttqq] (-1.74,7.78)-- (-4.01,3.85);
\draw[-stealth] [color=zzttqq] (-4.01,3.85)-- (-1.74,-0.08);
\draw[stealth-] (-1.74,7.78)-- (-1.66,5.22);
\draw[-stealth] (-1.66,5.22)-- (2.66,5.34);
\draw[-stealth] (2.66,5.34)-- (2.8,7.78);
\draw[-stealth] (2.66,5.34)-- (5.07,3.85);
\draw[-stealth] (2.66,5.34)-- (0.56,1.84);
\draw[-stealth] (0.56,1.84)-- (2.8,-0.08);
\draw[-stealth] (0.56,1.84)-- (-1.74,-0.08);
\draw[-stealth][-stealth] (0.56,1.84)-- (-1.66,5.22);
\draw[-stealth] (-1.66,5.22)-- (-4.01,3.85);

\begin{pgfinterruptboundingbox}

\tikzset{every loop/.style={min distance=25mm, looseness=15}}

\path (-1.74,-0.08) edge[loop below]  (-1.74,-0.08);
\path (2.8,-0.08) edge[loop below]  (2.8,-0.08);
\path (5.07,3.85) edge[in=30,out=60,loop]  (5.07,3.85);
\path (2.8,7.78) edge[loop above]  (2.8,7.78);
\path (-1.74,7.78) edge[loop above]  (-1.74,7.78);
\path (-4.01,3.85) edge[in=120,out=150,loop]  (-4.01,3.85);

\path (-1.74,-0.08) edge[loop left]  (-1.74,-0.08);
\path (2.8,-0.08) edge[loop right]  (2.8,-0.08);
\path (5.07,3.85) edge[in=-30,out=-60,loop]  (5.07,3.85);
\path (2.8,7.78) edge[loop right]  (2.8,7.78);
\path (-1.74,7.78) edge[loop left]  (-1.74,7.78);
\path (-4.01,3.85) edge[in=210,out=240,loop]  (-4.01,3.85);

  \end{pgfinterruptboundingbox}
  
\begin{scriptsize}
\fill [color=uququq] (-1.74,-0.08) circle (2.5pt);
\fill [color=uququq] (2.8,-0.08) circle (2.5pt);
\fill [color=uququq] (5.07,3.85) circle (2.5pt);
\fill [color=uququq] (2.8,7.78) circle (2.5pt);
\fill [color=uququq] (-1.74,7.78) circle (2.5pt);
\fill [color=uququq] (-4.01,3.85) circle (2.5pt);
\fill [color=uququq] (-1.66,5.22) circle (2.5pt);
\fill [color=uququq] (2.66,5.34) circle (2.5pt);
\fill [color=uququq] (0.56,1.84) circle (2.5pt);
\end{scriptsize}

\end{scope}

\begin{scope}[shift={(11,3.7)},scale=0.35]
\clip(-7.1,-2.7) rectangle (17.52,10.5);
\draw[stealth-] (-1.74,7.78)-- (-1.66,5.22);
\draw[-stealth] (-1.66,5.22)-- (2.66,5.34);
\draw[-stealth] (2.66,5.34)-- (2.8,7.78);
\draw[-stealth] (2.66,5.34)-- (5.07,3.85);
\draw[-stealth] (2.66,5.34)-- (0.56,1.84);
\draw[-stealth] (0.56,1.84)-- (2.8,-0.08);
\draw[-stealth] (0.56,1.84)-- (-1.74,-0.08);
\draw[-stealth][-stealth] (0.56,1.84)-- (-1.66,5.22);
\draw[-stealth] (-1.66,5.22)-- (-4.01,3.85);

\begin{pgfinterruptboundingbox}

\tikzset{every loop/.style={min distance=25mm, looseness=15}}

\path (-1.74,-0.08) edge[loop below]  (-1.74,-0.08);
\path (2.8,-0.08) edge[loop below]  (2.8,-0.08);
\path (5.07,3.85) edge[in=30,out=60,loop]  (5.07,3.85);
\path (2.8,7.78) edge[loop above]  (2.8,7.78);
\path (-1.74,7.78) edge[loop above]  (-1.74,7.78);
\path (-4.01,3.85) edge[in=120,out=150,loop]  (-4.01,3.85);

\path (-1.74,-0.08) edge[loop left]  (-1.74,-0.08);
\path (2.8,-0.08) edge[loop right]  (2.8,-0.08);
\path (5.07,3.85) edge[in=-30,out=-60,loop]  (5.07,3.85);
\path (2.8,7.78) edge[loop right]  (2.8,7.78);
\path (-1.74,7.78) edge[loop left]  (-1.74,7.78);
\path (-4.01,3.85) edge[in=210,out=240,loop]  (-4.01,3.85);

\path (-1.74,-0.08) edge[in=210,out=240,loop]  (-1.74,-0.08);
\path (2.8,-0.08) edge[in=-30,out=-60,loop]  (2.8,-0.08);
\path (5.07,3.85) edge[loop right]  (5.07,3.85);
\path (2.8,7.78) edge[in=30,out=60,loop]  (2.8,7.78);
\path (-1.74,7.78) edge[in=120,out=150,loop]  (-1.74,7.78);
\path (-4.01,3.85) edge[loop left]  (-4.01,3.85);

  \end{pgfinterruptboundingbox}
  
\begin{scriptsize}
\fill [color=uququq] (-1.74,-0.08) circle (2.5pt);
\fill [color=uququq] (2.8,-0.08) circle (2.5pt);
\fill [color=uququq] (5.07,3.85) circle (2.5pt);
\fill [color=uququq] (2.8,7.78) circle (2.5pt);
\fill [color=uququq] (-1.74,7.78) circle (2.5pt);
\fill [color=uququq] (-4.01,3.85) circle (2.5pt);
\fill [color=uququq] (-1.66,5.22) circle (2.5pt);
\fill [color=uququq] (2.66,5.34) circle (2.5pt);
\fill [color=uququq] (0.56,1.84) circle (2.5pt);
\end{scriptsize}

\end{scope}

\end{tikzpicture}
\begin{caption}{A face graph $G$ (left) and its associated looped face graphs $G^{2\sigma}$ (centre) and $(G^-)^{3\sigma}$ (right) together with out degree $3$ edge orientations.} 
    \label{fig:looped}
\end{caption}
\end{figure}

\begin{example}
Let $\hat G$ be a block-and hole graph on the face graph $G$ illustrated in Figure \ref{fig:looped}. The associated looped face graphs admit out degree $3$ edge orientations. Thus, by Lemma \ref{l:3orientation}, these multigraphs are $(3,0)$-tight. By Theorem \ref{thm:(3,0)} below, the block and hole graph $\hat G$ is $(3,6)$-tight and so, by Theorem \ref{t:ckpthm}, $\hat G$ is minimally $3$-rigid.
\end{example}

We now prove the main result of this section.

\begin{thm}\label{thm:(3,0)}
Let $\hat{G}$ be a block-and-hole graph with a single block and finitely many holes.
Then the following statements are equivalent.
\begin{enumerate}[(i)]
\item $\hat{G}$ is minimally $3$-rigid. 
\item $G^{2\sigma}$ is $(3,0)$-tight.
\item $(G^-)^{3\sigma}$ is  $(3,0)$-tight.
\end{enumerate}
\end{thm}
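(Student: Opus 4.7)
The plan is to reduce (i) to the condition ``$G^\dagger$ is $(3,6)$-tight'' using Theorem~\ref{t:ckpthm} and the isostatic substitution principle (Lemma~\ref{l:isp}), and then prove equivalence of this with each of (ii) and (iii) by direct counting. The equality parts are trivial: writing $k = |V(\partial B)|$, both $G^{2\sigma}$ and $(G^-)^{3\sigma}$ have $|V(G)|$ vertices and $|E(G)| + 2k$ edges, so
\[
f(G^\dagger) \;=\; 3(|V(G)|+2) - (|E(G)|+2k) \;=\; f(G^{2\sigma}) + 6 \;=\; f((G^-)^{3\sigma}) + 6,
\]
and hence $f(G^\dagger)=6$ if and only if $f(G^{2\sigma})=0$ if and only if $f((G^-)^{3\sigma})=0$.

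For the direction ``$G^\dagger$ is $(3,6)$-sparse''$\Rightarrow$``$G^{2\sigma}$ (resp.\ $(G^-)^{3\sigma}$) is $(3,0)$-sparse'', I first record the structural observation that in a $(3,6)$-sparse $G^\dagger$ every non-$\partial B$ edge of $G$ has at most one endpoint in $V(\partial B)$: otherwise $B^\dagger$ together with such an extra edge would have $|V|=k+2$ and $|E|=3k+1$, giving $f=5$. Given a subgraph $H$ of the looped face graph, partition $V(H) = V_0 \sqcup V_1$ with $V_0 = V(H) \cap V(\partial B)$, let $m$ be the number of non-loop edges of $H$ not contained in $V_0$, and form $\hat H \subseteq G^\dagger$ by adjoining all of $B^\dagger$ to these $m$ edges. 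A direct count gives $|V(\hat H)| = |V_1| + k + 2$ and $|E(\hat H)| = m + 3k$, hence $f(\hat H) = 6 + 3|V_1| - m$; applying $(3,6)$-sparsity yields $m \leq 3|V_1|$. Combining with $\ell \leq 2|V_0|$ for $G^{2\sigma}$ (resp.\ $\ell \leq 3|V_0|$ for $(G^-)^{3\sigma}$) and, for $G^{2\sigma}$ only, the bound $|V_0|$ on the remaining non-loop edges within $V_0$ (all of which must be $\partial B$ edges by the structural observation), we conclude $|E(H)| \leq 3|V(H)|$.

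For the converse, let $K \subseteq G^\dagger$ with $|E(K)| \geq 2$. If $K$ contains both poles $\{x,y\}$, form $K^*$ by deleting $\{x,y\}$, replacing each pole edge by a loop at its boundary endpoint, and, in the $(G^-)^{3\sigma}$ case, replacing each $\partial B$ edge $v_iv_{i+1} \in K$ by a single loop at $v_i$; this keeps the loop multiplicity at each boundary vertex within the two (resp.\ three) available slots. Then $|V(K^*)| = |V(K)|-2$ and $|E(K^*)| = |E(K)|$, so $f(K^*) = f(K) - 6 \geq 0$ by $(3,0)$-sparsity, giving $f(K) \geq 6$. If $K$ contains at most one pole, then $K$ is planar (a single pole, if present, can be placed inside the $B$-face of the planar graph $G$ and its star to $V(\partial B)$ realised planarly), so Euler's inequality $|E(K)| \leq 3|V(K)| - 6$ gives $f(K) \geq 6$ directly.

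The main obstacle I anticipate is the careful management of loop multiplicities in the $(G^-)^{3\sigma}$ constructions. In the reverse direction, one must assign each $\partial B$ edge $v_iv_{i+1}$ to just \emph{one} of its endpoints when converting to a loop, so that the total number of loops at a boundary vertex (up to one from an $x$-pole edge, one from a $y$-pole edge, and one from an outgoing $\partial B$ edge) fits in the three-loop budget. In the forward direction, a naive attempt to translate the ``third'' loop of $H$ at $v_i$ into the $\partial B$ edge $v_iv_{i+1}$ of $\hat H$ can force the inclusion of $v_{i+1} \notin V(H)$, producing an error term of $3q$ in the counting that the $(3,6)$-sparsity of $\hat H$ cannot absorb. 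The device of always augmenting $\hat H$ by the entire discus $B^\dagger$ circumvents this: then the $\partial B$ edges are automatically present in $\hat H$ without any extra vertex bookkeeping, and the loop bound $\ell \leq 3|V_0|$ is absorbed cleanly by the vertex count over $V_0$.
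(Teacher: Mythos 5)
Your proposal is correct, but it takes a genuinely different route from the paper. The paper never normalises to the discus: it proves (i)$\Rightarrow$(ii) directly on $\hat G$ with an arbitrary block, via the inclusion--exclusion $f(K'\cup\hat B)=f(K')+f(\hat B)-f(K'\cap\hat B)$ together with the bound $f(K'\cap\hat B)\geq 2|V(K'\cap\partial B)|$ (since $K'\cap\hat B$ sits inside the cycle $\partial B$); it proves (ii)$\Leftrightarrow$(iii) by transferring outdegree-$3$ orientations between the two looped graphs using Lemma \ref{l:3orientation} (the two loops at a boundary vertex force its single remaining out-slot to be taken by a $\partial B$ edge, so $\partial B$ is cyclically oriented and the third loop of $(G^-)^{3\sigma}$ stands in for the outgoing boundary edge); and it proves (iii)$\Rightarrow$(i) by a case analysis on $|E(K\cap\hat B)|$ using $f(K)=f(K\cap\hat B)+f(K\cap G)-f(K\cap\partial B)$ and planarity of subgraphs of the sphere triangulation, finishing with Theorem \ref{t:ckpthm}. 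You instead first replace the block by the discus $B^\dagger$ (Theorem \ref{t:ckpthm} plus Lemma \ref{l:isp}) and then prove both sparsity transfers by direct counting: forward by adjoining the \emph{entire} discus to an arbitrary subgraph $H$ --- your chord-freeness observation and the ``augment by all of $B^\dagger$'' device are exactly what make the count close cleanly --- and backward by deleting the two poles and compressing pole edges (and, in the $(G^-)^{3\sigma}$ case, tail-assigned $\partial B$ edges) into loops, an edge-preserving correspondence giving $f(K^*)=f(K)-6$, with Euler's bound $|E(K)|\leq 3|V(K)|-6$ handling subgraphs containing at most one pole. Your pole-to-loop compression is essentially the structural content of the paper's orientation argument, redeployed to prove sparsity transfer itself, and it yields (ii)$\Leftrightarrow$(iii) transitively without invoking Lemma \ref{l:3orientation}; what the paper's route buys is that it works with an arbitrary minimally $3$-rigid block throughout, avoiding the normalisation step. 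One small caution on that step: Lemma \ref{l:isp} as stated requires $V(K')\subseteq V(K'')$, which holds in neither direction of the swap $\hat B\leftrightarrow B^\dagger$ (poles on one side, internal block vertices on the other); the paper itself applies the lemma in this looser block-swap form (e.g.\ in Lemma \ref{l:csc}), justified by the general substitution principle of \cite[Corollary 2.6]{frw}, so you should appeal to that version --- and note that its inducedness hypothesis is supplied precisely by your chord-freeness observation, which is available in each direction from the $(3,6)$-tightness in hand at that point.
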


\begin{proof}
$(i)\Rightarrow(ii)$
Suppose $\hat{G}$ is minimally $3$-rigid. 
Let $K$ be a  subgraph of
$G^{2\sigma}$ and let $K'=K\cap G$ be the subgraph of $G$ obtained by removing all  self-loops from $K$. 
Note that $K'\cap\hat{B}$ is a subgraph of the boundary cycle  $\partial B$ and so $|E(K'\cap\hat{B})|\leq |V(K'\cap\hat{B})|$. It follows that $f(K'\cap\hat{B})\geq 2|V(K'\cap\partial B)|$.
Note that 
\[f(K'\cup \hat{B})=f(K')+f(\hat{B})-f(K'\cap\hat{B})
\leq f(K')+6-2|V(K'\cap \partial B)|
\leq f(K)+6.\]
Since $K'\cup\hat{B}$ is a subgraph of $\hat{G}$, it is
$(3,6)$-sparse, and so $f(K)\geq0$. 
We conclude that $G^{2\sigma}$ is $(3,0)$-sparse.
Note that $f(\partial B)=2|V(\partial B)|$ and so,  
\[f(\hat{G})=f(\hat{B})+f(G)-f(\partial B)
=6+f(G)- 2|V(\partial B)|=6+f(G^{2\sigma}).\]
Thus $f(G^{2\sigma})=f(\hat{G})-6=0$ and so $G^{2\sigma}$ is $(3,0)$-tight.

$(ii)\Leftrightarrow (iii)$
Note that on $V(\partial B)$, any outdegree $3$ orientation of the edges of $(G^-)^{3\sigma}$ or $G^{2\sigma}$ has a very constrained form.
For any vertex $v$ of $V(\partial B)\subset V((G^-)^{3\sigma})$, the three
self-loops on it must be oriented away from $v$, and similarly for the
two self-loops on the vertices of $\partial B\subset G^{2\sigma}$. Then
there is one remaining outgoing edge from each $v\in V(\partial B)\subset
V(G^{2\sigma})$ which must be one of the two edges of $\partial B$ that meet it. 
It follows that $\partial B\subset G^{2\sigma}$ must be oriented according to 
one of its two cyclic orientations.  Thus any outdegree 3 orientation of $(G^-)^{3\sigma}$ is
easily converted to one of $G^{2\sigma}$ and vice versa. The result now follows from Lemma \ref{l:3orientation}.

$(iii)\Rightarrow(i)$
Suppose  the multigraph $(G^-)^{3\sigma}$ is $(3,0)$-tight. 
Let $K$ be a subgraph of $\hat{G}$ containing at least two edges.
If $K$ is a subgraph of $G$ then, since $G$ is a
subgraph of a triangulated sphere, $K$ is $(3,6)$-sparse.  
If $K$ is not a subgraph of $G$ then we consider three possible cases: 

{\em Case 1:} Suppose $K\cap\hat{B}$ contains
at least two edges.
Consider the subgraph $(K\cap G^-)^{3\sigma}$ of the multigraph $(G^-)^{3\sigma}$.
Note that,
\[0\leq f((K\cap G^-)^{3\sigma})=f(K\cap G)-f(K\cap \partial B).
\]
Since $\hat{B}$ is $(3,6)$-sparse, we have $f(K\cap\hat{B})\geq 6$ and so,
\[f(K)=f(K\cap\hat{B})+f(K\cap G)-f(K\cap \partial B)\geq 6.
\]

{\em Case 2:} Suppose $K\cap\hat{B}$ contains no edges, or contains exactly one edge which lies in $\partial B$. Then $K$ must be the disjoint union 
of $K\cap G$ (which, as a subgraph of a triangulated sphere, is $(3,6)$-sparse) 
and some number of vertices in $\hat{B}$. Hence $f(K)\geq f(K\cap G) \geq 6$.

{\em Case 3:} Suppose $K\cap\hat{B}$ contains exactly one edge and that this edge does not lie in $\partial B$. Then $K$ must consist of $K\cap G$ with an
additional edge (which is still a subgraph of a triangulated sphere) 
together with some number of vertices in $\hat{B}$. Hence $f(K) \geq 6$.

We conclude that $\hat{G}$ is $(3,6)$-sparse.
Also, 
\[f(\hat{G})=f(\hat{B})+f(G)-f(\partial B)
=6+f(G)- 2|V(\partial B)|=6+f((G^-)^{3\sigma}).\]
Thus $f(\hat{G})=6$ and so $\hat{G}$ is $(3,6)$-tight.
By Theorem \ref{t:ckpthm}, $\hat{G}$ is minimally $3$-rigid.
\end{proof}

\section{Applications and Conjectures}
\label{s:applications}

\subsection{Rigidity in $\ell_p^3$}

The vertex splitting operation considered in Section \ref{s:split} is known to preserve  rigidity properties in geometric settings other than the Euclidean space $\mathbb{R}^3$. For example, it is known that vertex splitting preserves {\em independence} in every $3$-dimensional real normed linear space which is both smooth and strictly convex (see \cite[Proposition 4.7]{dkn}). It follows that any class of graphs which are constructible from an independent base graph by vertex splitting (for example, triangulations of a 2-sphere) will satisfy independence. Thus, with the main theorem of Section \ref{s:split} in hand, we obtain the following immediate corollary.

\begin{cor}
Let $X$ be a $3$-dimensional real normed linear space which is smooth and strictly convex.
Then every $(3,6)$-tight discus-and-hole graph, with a single discus, is independent in $X$.
\end{cor}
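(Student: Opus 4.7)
The plan is to combine the constructive characterisation from Theorem \ref{thm:MainImproved} with the preservation of independence under vertex splitting in smooth and strictly convex $3$-dimensional normed linear spaces, and then induct on the length of the construction sequence.

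First, I would invoke Theorem \ref{thm:MainImproved}: since the discus-and-hole graph $G^\dagger$ is assumed to be $(3,6)$-tight, it is constructible from $K_3$ by a finite sequence of vertex splitting operations. So there is a sequence $K_3 = G_0, G_1, \ldots, G_N = G^\dagger$ in which each $G_{i+1}$ is obtained from $G_i$ by a single vertex split.

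Next, I would establish the base case, namely that $K_3$ is independent in any $3$-dimensional real normed linear space $X$. This is immediate: a generic realisation of $K_3$ in $X$ has a rigidity matrix with three rows, and a direct argument (or a well-known fact) shows these three rows are linearly independent for generic positions, since the three edge vectors can be chosen to avoid any linear dependence forced by the three support functionals at those edges.

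Then the inductive step uses \cite[Proposition 4.7]{dkn}, which states that if $X$ is $3$-dimensional, smooth, and strictly convex, then vertex splitting preserves independence. Applying this proposition at each of the $N$ steps of the construction sequence yields that $G^\dagger$ is independent in $X$, as required.

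There is essentially no obstacle here beyond correctly assembling the pieces: Theorem \ref{thm:MainImproved} provides the construction sequence, \cite[Proposition 4.7]{dkn} provides the inductive step, and the base case is trivial. The only subtlety worth remarking on is ensuring that the hypotheses of \cite[Proposition 4.7]{dkn} — smoothness and strict convexity of $X$ in dimension three — are precisely the hypotheses we have assumed, so no additional verification is needed.
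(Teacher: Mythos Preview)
Your proposal is correct and follows essentially the same approach as the paper: invoke Theorem \ref{thm:MainImproved} (together with Theorem \ref{t:ckpthm}) to obtain a vertex-splitting construction of $G^\dagger$ from $K_3$, note that $K_3$ is independent in $X$, and apply \cite[Proposition 4.7]{dkn} inductively along the sequence. The paper's proof is simply a terser version of exactly this argument.
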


\proof
By \cite[Proposition 4.7]{dkn}, vertex splitting preserves independence in $X$. The graph $K_3$ is independent in $X$. Thus the result follows from Theorem \ref{thm:MainImproved}.
\endproof

In the case of $\ell_p^3$, where $p\in[1,\infty]$ and $p\not=2$, the minimally rigid graphs are $(3,3)$-tight. Here a simple graph $J$ is  {\em $(3,3)$-tight} if $f(J)=6$ and $f(J')\geq 3$ for any subgraph $J'$. The smallest (non-trivial) graph with this property is the complete graph $K_6$. It is conjectured that every $(3,3)$-tight simple graph is minimally rigid in  $\ell_p^3$ (see for example \cite{dkn}). We propose here a special case of this conjecture.

\begin{conjecture}
       Let $p\in[1,\infty]$, $p\not=2$. Let $\hat{G}$ be a block-and-hole graph with a single block. If the block is minimally rigid in $\ell_p^3$ 
       then the following statements are equivalent.
    \begin{enumerate}[(i)]
        \item $\hat{G}$ is minimally rigid in $\ell_p^3$.
        \item $\hat{G}$ is $(3,3)$-tight.
    \end{enumerate}
\end{conjecture}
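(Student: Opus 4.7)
The plan is to mirror the strategy of Theorem \ref{thm:MainImproved} in the $\ell_p^3$ setting. For the direction (i)$\Rightarrow$(ii), the paper already records that any minimally rigid graph in $\ell_p^3$ (for $p\neq 2$) is $(3,3)$-tight, since the generic $\ell_p^3$ rigidity matroid has $K_6$ as its smallest non-trivial rigid graph; this yields the required count $f(\hat G)=6$ and $f(J')\geq 3$ on subgraphs with at least two edges. So only (ii)$\Rightarrow$(i) requires substantive work.

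For (ii)$\Rightarrow$(i), my plan is to seek a constructive characterisation: every $(3,3)$-tight block-and-hole graph $\hat G$ with a single block $\hat B$ that is already minimally rigid in $\ell_p^3$ should be obtainable from $\hat B$ by a sequence of vertex splittings. Since vertex splitting preserves independence in every smooth and strictly convex $3$-dimensional normed space (\cite[Proposition 4.7]{dkn}), and $\ell_p^3$ is smooth and strictly convex for $p\in(1,\infty)\setminus\{2\}$, such a construction would immediately promote the hypothesis on $\hat B$ to minimal rigidity of $\hat G$. Executing this would require $(3,3)$-analogues of the combinatorial machinery in Section \ref{s:split}: $TT$ and $BH$ edge contractions preserving $(3,3)$-tightness, a revised notion of critical separating cycle attuned to freedom number $3$ rather than $6$, an analogue of the hole-filling lemma (Lemma \ref{HoleFilling}), and ultimately a $(3,3)$-version of Lemma \ref{l:cor33} ruling out terminal, indivisible, $BH$-reduced configurations.

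The main obstacle is that the arithmetic underlying Section \ref{s:split} is finely tuned to the $(3,6)$ setting. Lemma \ref{l:csc}, in particular, relies on the isostatic substitution principle, whose analogue in $\ell_p^3$ would require a separate geometric argument. The admissibility criterion for $TT$ contractions will likely tighten because critical separating cycles now have freedom count $3$ rather than $6$, shifting which subgraphs are critical and potentially creating new terminal configurations that block the induction. A distinct obstacle arises at the endpoints $p\in\{1,\infty\}$, where $\ell_p^3$ is neither smooth nor strictly convex and vertex splitting is not known to preserve independence; here one would need either a bespoke framework construction exploiting the polyhedral unit ball or an alternative reduction move, and indeed this regime is the most plausible source of any counterexamples to the conjecture.
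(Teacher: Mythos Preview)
The statement you are attempting to prove is labeled a \emph{Conjecture} in the paper, and the paper offers no proof of it; it is presented explicitly as an open special case of the general conjecture that $(3,3)$-tight graphs are minimally rigid in $\ell_p^3$. There is therefore no ``paper's proof'' to compare your proposal against.

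Your proposal is not a proof but a research programme, and you are candid about this: you outline how one might try to port the machinery of Section~\ref{s:split} to the $(3,3)$ setting, and then list the obstructions. Those obstructions are real and currently unresolved. In particular, the step you flag as relying on the isostatic substitution principle (Lemma~\ref{l:isp}) is genuinely problematic: no $\ell_p^3$ analogue is known, and without it Lemma~\ref{l:csc} and the reductions in the proof of Theorem~\ref{thm:MainImproved} do not transfer. Even your claimed easy direction (i)$\Rightarrow$(ii) is not quite as immediate as you suggest for $p\in\{1,\infty\}$, where the space of trivial infinitesimal motions is not $3$-dimensional in the usual sense and the count requires more care. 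Finally, your vertex-splitting strategy, even if the combinatorics could be made to work, would at best yield the result for $p\in(1,\infty)\setminus\{2\}$; you correctly note that the endpoints $p=1,\infty$ lie outside the smooth strictly convex regime covered by \cite{dkn}.

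In short: the gap is that the conjecture is open, the paper does not prove it, and your proposal is an outline of an approach together with an honest account of why that approach does not currently go through.
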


\subsection{Conjecture on global rigidity}

Establishing global rigidity is typically a more difficult problem than 
establishing rigidity for a given class of graphs. One of the reasons is that 
vertex splitting is less well understood in this context. Connelly and Whiteley have conjectured a necessary and sufficient condition for vertex splitting to preserve global rigidity in $\mathbb{R}^d$ \cite{CW}. This conjecture is still open but 
has been verified in certain special cases (see \cite{JT,CJT1,CJT2}) leading to global rigidity characterisations for braced plane triangulations and for triangulations of non-spherical surfaces. Given Theorem \ref{thm:MainImproved}, it is natural to wonder if similar global rigidity 
characterisations might be obtained for discus-and-hole graphs. 

\begin{conjecture}
    \label{conj:GR}
    Suppose that $G^\dagger$ is a discus-and-hole graph with exactly one discus.
    Then $G^\dagger$
    is generically globally rigid in $\mathbb R^3$ if and only if $G^\dagger$
    is $4$-connected and redundantly rigid in $\mathbb R^3$. 
\end{conjecture}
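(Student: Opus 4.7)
The first direction is the easy one: if $G^\dagger$ is generically globally rigid in $\mathbb{R}^3$, then by Hendrickson's classical necessary conditions $G^\dagger$ must be $4$-connected and redundantly rigid in $\mathbb{R}^3$. So the substantive content of the conjecture lies entirely in the converse.

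For the sufficiency direction, the natural plan is induction on $|V(G^\dagger)|$, leveraging the constructive characterisation of Theorem \ref{thm:MainImproved}. One would fix a small base case (for instance, a discus with no holes, which is essentially a bipyramid and is easily seen to be generically globally rigid), and then show that any $4$-connected, redundantly rigid discus-and-hole graph $G^\dagger$ above the base size admits an inverse vertex splitting to a smaller discus-and-hole graph $H^\dagger$ which is again $4$-connected and redundantly rigid, and for which the forward vertex split $H^\dagger \rightsquigarrow G^\dagger$ preserves generic global rigidity.

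The main obstacle is exactly this last clause: vertex splitting does \emph{not} preserve generic global rigidity in $\mathbb{R}^3$ in general. The Connelly--Whiteley conjecture \cite{CW} gives a precise predicted sufficient condition (roughly, that the common neighbours $v_1,v_2$ of the two split vertices, together with the rest of the graph, satisfy appropriate connectivity), but that conjecture remains open. One would hope that the particular vertex splits arising in discus-and-hole graphs (splits along $TT$ or $BH$ edges, or at poles of the discus) fall within the range of the currently verified cases \cite{JT,CJT1,CJT2}, and that the planar/spherical structure of the face graph $G$ together with the hypothesised $4$-connectedness of $G^\dagger$ can be exploited to check the hypotheses in each case.

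A secondary obstacle is the existence of a suitable inverse vertex split: the reductions developed in Section \ref{s:split} (via $BH$-reduced, terminal graphs, and the machinery of critical separating cycles) produce an inverse split preserving minimal $3$-rigidity, but one would need to refine them to preserve $4$-connectivity and redundant rigidity as well. An alternative route, bypassing vertex splitting entirely, would be to apply Connelly's stress-matrix sufficient condition directly: construct on a generic realisation an equilibrium stress whose stress matrix has the maximum possible rank $|V(G^\dagger)|-4$. The discus structure, which packages each block as a bipyramid with two poles, seems especially amenable to building such stresses explicitly from stresses on the individual blocks glued along the boundaries; this is perhaps the most promising concrete avenue, though the gluing argument at the boundary cycles is where the real technical work would lie.
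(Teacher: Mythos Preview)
The statement is a \emph{conjecture}, not a theorem, and the paper offers no proof of the ``if'' direction. The only thing the paper establishes is precisely your first paragraph: the ``only if'' implication is Hendrickson's theorem (the paper cites \cite{Hend} and says nothing more). So on the one direction that the paper does address, your proposal matches it exactly.

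Everything from your second paragraph onward goes beyond what the paper contains. Your discussion is a sensible survey of possible attack lines---induction via Theorem~\ref{thm:MainImproved}, the Connelly--Whiteley conjecture and its partial verifications, and the stress-matrix route---but you should be clear that none of this constitutes a proof, nor does the paper claim one. In particular, the obstacles you name (vertex splitting need not preserve generic global rigidity in $\mathbb{R}^3$; the reductions of Section~\ref{s:split} do not obviously preserve $4$-connectivity or redundant rigidity) are genuine, and the paper makes no attempt to overcome them. Your write-up would be improved by stating up front that the converse remains open and that what follows is a sketch of plausible strategies rather than a proof proposal in the usual sense.
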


Note that the ``only if" implication in Conjecture \ref{conj:GR} is already well known (see \cite{Hend}).

\subsection{Connection to rigid origami}
\label{s:origami}
Rigid origami is the study of structures made out of flat rigid sheets
joined at hinges.  Such structures have inspired work in structural
engineering, mechanical design and the physics of mechanical
metamaterials \cite{glaucio,guest,jesse,wei}.  It is of practical interest, given
such a structure, to determine its mechanical properties, and as a
very first step, one would like to know whether it is floppy or rigid.
It is natural, given the constraint that the sheets remain rigidly
flat, to mathematically model rigid origami by polyhedral surfaces
(with boundary).

The connection to the block-and-hole graphs considered in this article
is then as follows.  Given a polyhedral surface, we wish to replace it
by a bar-joint framework such that all vertices and edges of the
polyhedral surface become joints and bars, respectively.  In order for
the framework to have the same rigidity properties we must add
additional bars and joints to the non-triangular faces, as they could
otherwise bend and flex in the framework.  By the isostatic
substitution principle (Lemma \ref{l:isp}), this can be done without introducing dependencies in the
bars by adding any minimally 3-rigid graph on the vertices of the
planar face.

For example, the following two part construction works: first,
triangulate each of the non-triangular faces and second, for each
non-triangular face, create a new joint off the plane of the face with
bars to each of the vertices of that face.  Note that this replaces
the rigid face with a triangulated prism.

One can then naturally identify these with ``blocks" and the missing
faces as ``holes".  One important caveat is that the realizations of
block-and-hole graphs arising from the above construction are not
generic -- the blocks are bounded by sets of coplanar vertices.  It is
natural of course to conjecture (along the lines of the molecular
conjecture of Tay and Whiteley \cite{taywhiteley} proved by Katoh and Tanigawa
\cite{molecular}) that the
rigidity of generic polyhedral surfaces can indeed be predicted by the
rigidity of structures where the blocks are made more generic, but
this remains to be proven.

One further point is that the definition of rigid origami above allows
vertices to have discrete Gaussian curvature (i.e.\ the angles of the
faces around them may not sum to $2\pi$).  Such a structure could not be
folded from an ordinary sheet of paper.  It would be interesting to
consider the ``developable" rigid origami case (where all angle-sums
around vertices are $2\pi$), and this would require the consideration of
further non-genericities.  It may be that block-and-hole graphs
provide the appropriate counts for ``generic developable rigid origami"
as well.

Assuming a suitable ``molecular origami conjecture" holds, 
Theorem \ref{t:ckpthm} and Theorem~\ref{thm:(3,0)} give a way
of determining the rigidity or flexibility of rigid origami with
either (1) one non-triangular face and an arbitrary number of
non-triangular holes or (2) one non-triangular hole and an arbitrary
number of non-triangular faces (related by block-and-hole swapping).  
Note that ``pure" origami folded from a
single-sheet without allowing any cutting leads at the combinatorial
level to block-and-hole graphs which satisfy (2), with the exterior of
the paper viewed as a large hole.

\section{Acknowledgement}
This article is based on work initiated by the authors during the BIRS workshop on Advances in Combinatorial and Geometric Rigidity (15w5114).

\end{document}